\numberwithin{equation}{section}
\theoremstyle{plain}
\newtheorem{thm}{Theorem}[section]
\newtheorem{theorem}[thm]{Theorem}
\newtheorem{lemma}[thm]{Lemma}
\newtheorem{corollary}[thm]{Corollary}
\newtheorem{proposition}[thm]{Proposition}
\theoremstyle{definition}
\newtheorem{remark}[thm]{Remark}
\newtheorem{definition}[thm]{Definition}
\newtheorem{example}[thm]{Example}
\newtheorem{defn-thm}[thm]{Definition-Theorem}
\newcommand{\sO}{{\mathcal O}}
\def\b{\bar}
\def\mb{\mathbb}
\def\mc{\mathcal}
\def\t{\Delta}
\renewcommand{\P}{{\mathbb P}}
\newcommand{\Q}{{\mathbb Q}}
\newcommand{\R}{{\mathbb R}}
\newcommand{\qtq}[1]{\quad\mbox{#1}\quad}
\newcommand{\bp}{\bar{\partial}}
\newcommand{\Om}{\Omega}
\newcommand{\ds}{\oplus}
\newcommand{\ts}{\otimes}
\newcommand{\btheorem}{\begin{theorem}}
\newcommand{\etheorem}{\end{theorem}}
\newcommand{\bproposition}{\begin{proposition}}
\newcommand{\eproposition}{\end{proposition}}
\newcommand{\bdefinition}{\begin{definition}}
\newcommand{\edefinition}{\end{definition}}
\newcommand{\bcorollary}{\begin{corollary}}
\newcommand{\ecorollary}{\end{corollary}}
\newcommand{\bproof}{\begin{proof}}
\newcommand{\eproof}{\end{proof}}
\newcommand{\bremark}{\begin{remark}}
\newcommand{\eremark}{\end{remark}}
\newcommand{\eexample}{\end{example}}
\newcommand{\bexample}{\begin{example}}
\newcommand{\elemma}{\end{lemma}}
\newcommand{\blemma}{\begin{lemma}}
\newcommand{\sq}{\sqrt{-1}}
\newcommand{\p}{\partial}
\renewcommand{\bar}{\overline}
\renewcommand{\phi}{\varphi}
\newcommand{\ka}{K\"ahler }
\newcommand{\ee}{\end{eqnarray*}}
\newcommand{\be}{\begin{eqnarray*}}
\newcommand{\beq}{\begin{equation}}
\newcommand{\eeq}{\end{equation}}
\newcommand{\bd}{\begin{enumerate}}
\newcommand{\ed}{\end{enumerate}}
\renewcommand{\tilde}{\widetilde}
\renewcommand{\>}{\rightarrow}
\begin{document}
\title{Logarithmic vanishing theorems on compact K\"{a}hler manifolds I}
\makeatletter
\let\uppercasenonmath\@gobble% disables title uppercase
\let\MakeUppercase\relax% disables author uppercase
\let\scshape\relax% disables section smallcaps
\makeatother

\author{Chunle Huang}

\address{Chunle Huang, Center of Mathematical Science, Zhejiang
University, Hangzhou, 310027, CHINA. } \email{chunle@zju.edu.cn}

\author{Kefeng Liu}
\address{Kefeng Liu, Department of Mathematics,
Capital Normal University, Beijing, 100048, China}
\address{Department of Mathematics, University of California at Los Angeles, California 90095}

\email{liu@math.ucla.edu}
\thanks{1. K. Liu is supported in part by NSF Grant.}
\author{Xueyuan Wan}
\address{Xueyuan Wan, Chern Institute of Mathematics \& LPMC, Nankai University, 300071}
\email{xywan@mail.nankai.edu.cn}
\thanks{2. X. Wan is partially supported by China Scholarship Council / University of California, Los Angeles Joint PhD. Student. }

\author{Xiaokui Yang}\thanks{3. X.Yang is partially supported by China's
Recruitment
 Program of Global Experts and National Center for Mathematics and Interdisciplinary Sciences,
 Chinese Academy of Sciences.}
\address{Xiaokui Yang, Morningside Center of Mathematics, Academy of Mathematics and Systems Science\\ Chinese Academy of Sciences, Beijing, 100190, China}
\address{Hua Loo-Keng Key Laboratory of Mathematics, Academy of Mathematics and Systems Science\\ Chinese Academy of Sciences, Beijing,100190, China}

 \email{xkyang@amss.ac.cn}

\maketitle
\begin{abstract}
In this paper, we first establish an  $L^2$-type Dolbeault
isomorphism for logarithmic differential forms by H\"{o}rmander's
$L^2$-estimates. By using this isomorphism and the construction of
smooth Hermitian metrics, we obtain a number of new
%Akizuki-Kodaira-Nakano type
vanishing theorems for sheaves of logarithmic differential forms on
compact K\"ahler manifolds with simple normal crossing divisors,
which generalize several classical vanishing theorems, including
Norimatsu's vanishing theorem, Gibrau's vanishing theorem, Le
Potier's vanishing theorem and a version of the Kawamata-Viehweg
vanishing theorem.
\end{abstract}

\maketitle %{\small{\setcounter{tocdepth}{1} \tableofcontents
%
%}}

\section{Introduction}
The basic properties of the sheaf of  logarithmic differential forms
and of the sheaves with logarithmic integrable connections on smooth
projective manifolds were developed  by Deligne in \cite{Del70}.
Esnault and Viehweg investigated in \cite{EV06} the relations
between logarithmic de Rham complexes and vanishing theorems on
complex algebraic manifolds, and showed that many vanishing theorems
follow from the degeneration of certain Hodge to de Rham type
spectral sequences. For a comprehensive description of the topic,
we refer the reader to Esnault and Viehweg's work \cite{EV92} and
also the references therein.

In this paper, we  develop an effective analytic method to prove
%the Akizuki-Kodaira-Nakano type
vanishing theorems for the sheaves of logarithmic differential forms
on \emph{compact K\"ahler manifolds}. One of our motivations to
develop this method is to prove Fujino's conjecture (e.g.
\cite[Conjecture~2.4]{Fujino2},
\cite[Problem~1.8]{Fujino3},\cite[Section~3]{Fujino4} and
\cite[Conjecture~1.2]{Mat16}) on a logarithmic version of the Kollar
injectivity theorem. Let $X$ be a compact K\"ahler manifold of
dimension $n$ and $Y=X-D$ where $D=\sum_{i=1}^{s}D_i$ is a simple
normal crossing divisor in $X$. Suppose that $E$ is an Hermitian
vector bundle over $X$. We first describe the key steps and main
difficulties in our analytic approach. Let $h_Y^E$ and $\omega_Y$ be
two smooth metrics on $E|_Y$ and $Y$ respectively, then we need to
show

\bd \item[(a)] there is an $L^2$ fine resolution
$(\Omega_{(2)}^{p,\bullet}(X,E,\omega_Y,{h}_Y^E), \bp)$
 of the sheaf of  logarithmic holomorphic differential forms $\Omega^{p}(\log
 D)\otimes\mathcal{O}(E)$ whenever the metrics $h_Y^E$ and $\omega_Y$ are chosen to be
 suitable;

\item[(b)] the desired curvature conditions for $h_Y^E$ and $\omega_Y$ can imply
vanishing theorems for
$(\Omega_{(2)}^{p,\bullet}(X,E,\omega_Y,{h}_Y^E), \bp)$ by using
$L^2$-estimate. \ed

\noindent The main difficulties arise from the construction of the
Hermitian metric $h^E_Y$ and the Poincar\'e type metric $\omega_Y$
which are suitable for both $(a)$ and $(b)$.

It is well-known that
various vanishing theorems are very important in complex
analytic geometry and algebraic geometry.  For instance,  the
Akizuki-Kodaira-Nakano vanishing theorem asserts that if $L$ is a
positive line bundle over a compact K\"ahler manifold $X$, then
\begin{equation}
H^{q}(X, \Om_X^p\ts L)=0 \qtq{for any $p+q\geq \dim X+1$.}\nonumber
\end{equation}
The main purpose of this paper is  to investigate logarithmic type
Akizuki-Kodaira-Nakano vanishing theorems for the pair ($X,D$). The
first main result of our paper is

\begin{thm} \label{main} Let $X$ be a compact \ka manifold of dimension $n$ and $D=\sum_{i=1}^s D_i$ be a
simple normal crossing divisor in $X$.  Let $N$ be a line bundle and
$\Delta=\sum_{i=1}^s a_iD_i$ be an $\R$-divisor  with $a_i\in[0,1]$
such that $N\otimes \mc{O}_X([\t])$ is a $k$-positive $\R$-line
bundle. Then for any nef line bundle $L$, we have
  \begin{align*}
    H^q(X,\Omega^p_X(\log D)\otimes L\otimes N)=0\quad \text{for any}\,\,\, p+q\geq n+k+1.
  \end{align*}
\end{thm}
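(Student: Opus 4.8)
The plan is to deduce the vanishing from an $L^2$-estimate on the open manifold $Y=X-D$, using the logarithmic $L^2$-Dolbeault isomorphism announced in step (a). Set $E=L\otimes N$. The first task is to choose a Poincar\'e-type K\"ahler metric $\omega_Y$ on $Y$ and a smooth Hermitian metric $h_Y^E$ on $E|_Y$ so that $(\Omega_{(2)}^{p,\bullet}(X,E,\omega_Y,h_Y^E),\bp)$ is a fine resolution of $\Omega_X^p(\log D)\otimes\mathcal{O}(E)$; granting this, one obtains
\begin{align*}
H^q(X,\Omega_X^p(\log D)\otimes L\otimes N)\cong H^{p,q}_{(2)}(X,E,\omega_Y,h_Y^E),
\end{align*}
and it suffices to show that every $L^2$ $\bp$-harmonic $E$-valued $(p,q)$-form vanishes once $p+q\geq n+k+1$.

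The heart of the argument is the construction of $h_Y^E$. I would first fix a smooth Hermitian metric $h_0$ on the $k$-positive $\R$-line bundle $A:=N\otimes\mathcal{O}_X([\Delta])$, so that its curvature $i\Theta(h_0)$ has at least $n-k$ positive eigenvalues at every point of $X$. Writing $s_i$ for the canonical sections cutting out $D_i$, I would then twist $h_0$ by the singular weight $\prod_i|s_i|^{2(a_i-[a_i])}$ arising from $\mathcal{O}_X([\Delta]-\Delta)=\mathcal{O}_X(\sum_i([a_i]-a_i)D_i)$, producing a metric on $N|_Y$ whose curvature is smooth and still $k$-positive on $Y$, while the fractional exponents $a_i-[a_i]$ are exactly those for which the resulting $L^2$-condition selects the logarithmic sheaf (this is what forces step (a) to hold for this choice). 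Finally I would bring in $L$: since $L$ is nef on the compact K\"ahler $X$, for every $\eps>0$ there is a smooth metric $h_L^\eps$ with $i\Theta(h_L^\eps)\geq-\eps\omega$; tensoring $h_L^\eps$ into the metric above yields the candidate $h_Y^E$, at the cost of a curvature error of size $\eps$ that will be absorbed in the limit.

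With these metrics in place I would invoke the Bochner-Kodaira-Nakano identity / H\"ormander $L^2$-estimate of step (b): for a $\bp$-harmonic $(p,q)$-form $u$ with values in $E$,
\begin{align*}
0=\|D'u\|^2+\|(D')^{*}u\|^2+\langle[i\Theta(h_Y^E),\Lambda_{\omega_Y}]u,u\rangle\geq\langle[i\Theta(h_Y^E),\Lambda_{\omega_Y}]u,u\rangle,
\end{align*}
so the vanishing reduces to pointwise positivity of the curvature term on $(p,q)$-forms. Diagonalizing $i\Theta$ with respect to $\omega_Y$, the term on the component $u_{I\bar J}$ equals $\sum_{j\in J}\gamma_j-\sum_{i\notin I}\gamma_i$, where at least $n-k$ of the eigenvalues $\gamma_\ell$ are positive. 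The device is to let $\omega_Y$ degenerate along the $n-k$ positive directions of the curvature, making those eigenvalues arbitrarily large while the remaining $k$ stay bounded. An elementary combinatorial estimate then gives positivity for all multi-indices precisely when $p+q\geq n+k+1$ (note that in this range one necessarily has $p,q\geq k$, which is what makes the count work out); letting $\eps\to 0$ absorbs the nef error term and forces $u=0$.

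The main obstacle is the one flagged in the introduction: producing a single pair $(\omega_Y,h_Y^E)$ compatible with step (a) \emph{and} step (b) at once. The singular weight that the $\R$-divisor $\Delta$ forces on $N$ (needed so the $L^2$-resolution computes the logarithmic cohomology) must be reconciled with the Poincar\'e-type degeneration of $\omega_Y$ along $D$ and with the eigenvalue-degeneration used to exploit $k$-positivity, all while keeping the curvature of $h_Y^E$ controlled on $Y$; verifying that the harmonic theory and the representation of cohomology by $L^2$ harmonic forms remain valid on the complete noncompact $Y$ is the technical core. The nef regularization, together with the uniform-in-$\eps$ curvature bounds required to pass to the limit, is a secondary but still delicate point.
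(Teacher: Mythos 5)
Your overall architecture matches the paper's: reduce to $L^2$-cohomology on $Y=X-D$ via the logarithmic $L^2$-Dolbeault resolution, encode the $\R$-divisor $\Delta$ through fractional powers of the $\|\sigma_i\|$ in a metric on $N|_Y$, regularize the nef bundle $L$ by a metric with curvature $\geq-\eps\omega_0$, and conclude with the eigenvalue count $\sum_{i\leq q}\gamma_i-\sum_{j>p}\gamma_j>0$ for $p+q\geq n+k+1$; that combinatorial step is essentially identical to the paper's. However, the step you yourself flag as ``the main obstacle'' --- producing a single pair $(\omega_Y,h_Y^E)$ compatible with both (a) and (b) --- is the actual content of the proof, and the concrete choices you sketch would not deliver it. First, the weight $\prod_i|s_i|^{2(a_i-[a_i])}$ has exponent $0$ whenever $a_i\in\{0,1\}$, and a zero exponent along a component $D_i$ fails to select the logarithmic sheaf there: the local $L^2$ condition at $q=0$ reads $\beta_i>-\tau_i$, so one needs $\tau_i\in(0,1]$ strictly positive. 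The paper instead perturbs the $a_i$ to nearby $\tau_i\in(0,1]$, paying a curvature error controlled by $\frac{\delta}{2}\omega_0$. Second, and more seriously, your weight omits the factor $\prod_i\bigl(\log^2(\eps\|\sigma_i\|^2_{D_i})\bigr)^{\alpha/2}$. Its curvature contributes the terms $\alpha\,\p\log\|\sigma_i\|^2_{D_i}\wedge\bp\log\|\sigma_i\|^2_{D_i}/\bigl(\log(\eps\|\sigma_i\|^2_{D_i})\bigr)^2$, which are exactly what make the paper's choice $\omega_Y:=\sqrt{-1}\Theta(\mathscr{F},h^{\mathscr{F}}_{\alpha,\eps,\tau})+2(4n+1)\delta\omega_0$ a complete Poincar\'e-type K\"ahler metric along $D$, and what drive the local Nakano positivity used to prove exactness of the $L^2$ resolution at $q\geq1$ in the proof of Theorem \ref{L2 iso11}. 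Without that factor neither half of step (a) goes through.

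Third, your device of letting $\omega_Y$ ``degenerate along the $n-k$ positive curvature directions'' is in tension with the requirement that $\omega_Y$ degenerate along $D$ in Poincar\'e fashion, which is what the resolution needs; you do not explain how to do both at once. The paper reconciles the two by taking $\omega_Y$ to be the curvature form itself shifted by $2(4n+1)\delta\omega_0$, with $\delta=c_0/32n^2$ fixed once and for all by the uniform gap $c_0=\min_{x\in X}\lambda^{k+1}_{\omega_0}(h^F)(x)>0$. This single choice normalizes all eigenvalues of $\sqrt{-1}\Theta(\mathscr{F},h^{\mathscr{F}}_{\alpha,\eps,\tau})$ with respect to $\omega_Y$ into $[-\frac{1}{4n},1)$, with the top $n-k$ of them at least $1-\frac{1}{4n}$, so no limit $\eps\to0$ is required. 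Relatedly, your appeal to harmonic representation of $L^2$ cohomology on the complete noncompact $Y$ (which needs closedness of the range of $\bp$) is avoidable: the paper applies H\"ormander's existence theorem (Lemma \ref{L2}) directly to show every $\bp$-closed $L^2$ form is $L^2$-exact, which kills $H^{p,q}_{(2)}$ outright. So the skeleton of your proposal is right, but the construction that makes it work is missing.
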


\noindent As we pointed out before, the key ingredient  is the
construction of suitable Hermitian metrics.
In the analytical setting, the positivity of  $\R$-line bundle, which will be defined in Section 2.4, is defined
by using positivity of curvature which is very flexible to use, since we can multiply arbitrary real coefficients on the curvature
of a line bundle to obtain certain desired curvature property. In particular,
the theory of  $\R$-divisors (or
$\R$-line bundles) in algebraic geometry is not used in this paper, which is also a notable
advantage in our analytic approach. On the other hand, the setting
in Theorem \ref{main} is quite general and it has many
straightforward applications in complex analytic geometry and
complex algebraic geometry. The first application is the following
log type Gibrau's vanishing theorem.
\begin{corollary}
Let $X$ be a compact \ka manifold of dimension $n$ and $D$ be a
simple normal crossing divisor in $X$. If $L$ is a nef line bundle
and $N$ is a $k$-positive line bundle over $X$, then  $$  H^{q}(X,
\Om_X^p(\log D)\ts L\ts N)=0 \qtq{for any $p+q\geq n+k+1$.} $$
\end{corollary}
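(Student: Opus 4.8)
The plan is to obtain this corollary as the immediate specialization of Theorem \ref{main} to the case of vanishing boundary coefficients. The general formulation of Theorem \ref{main} already contains the present statement as the special case $\Delta=0$, so no new analysis is required; the entire content has been absorbed into the theorem.

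Concretely, I would apply Theorem \ref{main} with the $\R$-divisor $\Delta=\sum_{i=1}^s a_iD_i$ chosen so that every $a_i=0$, which is admissible since $0\in[0,1]$. Then $\Delta=0$, and consequently its round-down satisfies $[\Delta]=0$, so that $\mc{O}_X([\Delta])=\mc{O}_X$ is the trivial line bundle and $N\ts\mc{O}_X([\Delta])\cong N$ as holomorphic line bundles. The hypothesis of Theorem \ref{main} thereby reduces to the requirement that $N$ itself be a $k$-positive $\R$-line bundle.

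The single thing to verify is that an ordinary $k$-positive line bundle $N$, as hypothesized here, is in particular a $k$-positive $\R$-line bundle in the curvature-theoretic sense of Section 2.4. This is immediate from the definitions: an integral line bundle carrying a smooth Hermitian metric whose Chern curvature exhibits the required eigenvalue positivity at every point of $X$ is precisely the $\R$-line bundle $1\cdot N$ equipped with the same curvature form, and hence inherits the same $k$-positivity. With this identification, the hypotheses of Theorem \ref{main} are satisfied, with $L$ nef and $N\ts\mc{O}_X([\Delta])=N$ being $k$-positive.

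Invoking Theorem \ref{main} then yields $H^q(X,\Om_X^p(\log D)\ts L\ts N)=0$ for all $p+q\geq n+k+1$, which is exactly the assertion of the corollary. I do not expect any genuine obstacle: all of the analytic content---the construction of the Poincar\'e-type metric $\omega_Y$ and the Hermitian metric $h_Y^E$, the associated $L^2$ fine resolution, and the curvature estimates underlying the vanishing---resides in Theorem \ref{main}, and this corollary is a purely definitional specialization of it.
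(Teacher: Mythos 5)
Your proposal is correct and matches the paper exactly: the paper presents this corollary as an immediate application of Theorem \ref{main} with no further argument, i.e.\ precisely the specialization $\Delta=0$ (all $a_i=0$), so that $N\ts\mc{O}_X([\Delta])=N$ and the hypothesis reduces to the $k$-positivity of $N$. The only cosmetic remark is that in this paper $\mc{O}_X([\Delta])$ denotes the $\R$-line bundle $\sum_i a_i[D_i]$ associated to the $\R$-divisor rather than a round-down, but with all $a_i=0$ this is trivial either way, so your argument is unaffected.
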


\noindent In particular, we have

\bcorollary\label{0c22} Let $X$ be a compact \ka manifold of
dimension $n$ and $D$ be a simple normal crossing divisor. Suppose
that $L\rightarrow X$ is an ample line bundle, then
\begin{equation}
H^{q}(X, \Om_X^p(\log D)\ts L)=0\qtq{for any $p+q\geq
n+1$.}\nonumber
\end{equation}
\ecorollary

\noindent This well-known result is proved  by Norimatsu
(\cite{Norimatsu19}) using analytic methods (see also
Deligne-Illusie's proof in \cite{DI87} by the
characteristic $p$ methods). As an analogue to Corollary \ref{0c22},
we  obtain  the following log  type Le Potier vanishing theorem for
ample vector bundles.

\bcorollary  Let $X$ be a compact \ka manifold of dimension $n$ and
$D$ be a simple normal crossing divisor. Suppose that $E\rightarrow
X$ is an ample vector  bundle of rank $r$. Then
\begin{equation}
H^{q}(X, \Om_X^p(\log D)\ts E)=0\qtq{for any $p+q\geq
n+r$.}\nonumber
\end{equation}
\ecorollary

\vskip 0.3\baselineskip

As we know that the Kawamata-Viehweg type vanishing theorems have
played fundamental roles in algebraic geometry and complex analytic
geometry (e.g. \cite{EV06, Dem, Cao14, GZ15, FM}). As another
application of Theorem \ref{main}, we get a log type vanishing
theorem for $k$-positive line bundles over compact \emph{K\"ahler
manifolds}, which generalizes a version of the Kawamata-Viehweg
vanishing theorem over projective manifolds.

\begin{theorem}\label{main2}
 Let $X$ be a compact \ka manifold of
dimension $n$ and $D=\sum_{i=1}^s D_i$ be a simple normal crossing
divisor. Suppose $F$ is a line bundle over $X$ and $m$ is a positive
real number such that $mF=L+D'$, where $D'=\sum_{i=1}^s \nu_i D_i$
is an effective normal crossing $\R$-divisor and  $L$ is a
$k$-positive $\R$-line bundle. Then
  \begin{align}
    H^q\left(X,\Omega^p(\log D)\otimes F\otimes
    \mc{O}\left(-\sum_{i=1}^s\left(1+\left[\frac{\nu_i}{m}\right]\right)D_i\right)\right)=0\label{KVk}
  \end{align}
  for $p+q\geq n+k+1$.
\end{theorem}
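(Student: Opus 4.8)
The plan is to obtain Theorem \ref{main2} directly from Theorem \ref{main}, by taking the nef line bundle there to be trivial and packaging the entire twist into a single $k$-positive $\R$-line bundle. First I would set
\[
  N:=F\otimes\mc{O}\left(-\sum_{i=1}^s\left(1+\left[\frac{\nu_i}{m}\right]\right)D_i\right),
\]
which is a genuine (integral) line bundle since each $1+[\nu_i/m]$ is an integer, and I would choose $\mc{O}_X$ as the nef line bundle in Theorem \ref{main}. With these choices the cohomology group appearing in Theorem \ref{main} is exactly $H^q(X,\Omega^p_X(\log D)\otimes\mc{O}_X\otimes N)=H^q(X,\Omega^p_X(\log D)\otimes N)$, which is the group in \eqref{KVk}. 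Thus it suffices to exhibit an $\R$-divisor $\Delta=\sum_{i=1}^s a_iD_i$ with all $a_i\in[0,1]$ for which $N\otimes\mc{O}_X(\Delta)$ is a $k$-positive $\R$-line bundle.

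The key step is a rounding computation. Since $m$ is a positive real number and $mF=L+\sum_i\nu_iD_i$, we may write $F=\frac{1}{m}L+\sum_i\frac{\nu_i}{m}D_i$ as $\R$-line bundles. Splitting $\nu_i/m=[\nu_i/m]+\{\nu_i/m\}$ into its integer and fractional parts, the correction coefficient $1+[\nu_i/m]$ absorbs the integer part of $\nu_i/m$ and leaves a remainder, so that
\[
  N=\frac{1}{m}L-\sum_{i=1}^s\left(1-\left\{\frac{\nu_i}{m}\right\}\right)D_i
\]
as $\R$-line bundles. I would therefore set $a_i:=1-\{\nu_i/m\}$, so that $N\otimes\mc{O}_X(\Delta)=\frac{1}{m}L$. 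Because $D'$ is effective we have $\nu_i\geq 0$, hence $\{\nu_i/m\}\in[0,1)$ and $a_i\in(0,1]\subseteq[0,1]$, exactly as required by the hypothesis of Theorem \ref{main}.

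It then remains to check that $\frac{1}{m}L$ is $k$-positive. This is immediate: the $k$-positivity of Section 2.4 is a pointwise condition on the eigenvalues of the Chern curvature of a smooth metric on the $\R$-line bundle, and that condition is plainly unaffected when the curvature is multiplied by the positive constant $\frac{1}{m}$. Hence $N\otimes\mc{O}_X(\Delta)=\frac{1}{m}L$ is a $k$-positive $\R$-line bundle, and Theorem \ref{main} yields the vanishing \eqref{KVk} for all $p+q\geq n+k+1$. I do not expect a genuine obstacle, since all of the analytic content is already isolated in Theorem \ref{main}; the only points needing care are the bookkeeping that forces the remainders $a_i=1-\{\nu_i/m\}$ into the admissible range $[0,1]$, and the routine observation that $k$-positivity is invariant under positive rescaling of the curvature. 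The same reduction works verbatim with a nontrivial nef twist, by carrying a general nef line bundle through Theorem \ref{main} instead of specializing it to $\mc{O}_X$.
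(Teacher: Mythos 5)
Your proposal is correct and follows essentially the same route as the paper: the same choice of $N$, the same $\R$-divisor $\Delta=\sum_i\bigl(1+\bigl[\frac{\nu_i}{m}\bigr]-\frac{\nu_i}{m}\bigr)D_i$ (your $a_i=1-\{\nu_i/m\}$ is just this rewritten), the same identity $N\otimes\mc{O}_X([\Delta])=\frac{1}{m}L$, and an appeal to Theorem \ref{main} with trivial nef twist. The only difference is that you spell out the two sanity checks (that $a_i\in[0,1]$ and that $k$-positivity survives scaling by $\frac{1}{m}$) which the paper leaves implicit.
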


\noindent \textbf{Remark}. In particular, if $mF=L+D'$ where $L$ is
an ample line bundle and $D'$ is an effective divisor, bypassing
Hironaka's desingularization procedure, one  obtains the classical
Kawamata-Viehweg vanishing from \ref{KVk} by taking $p=n$ and $k=0$.
It is also worth mentioning that, in \cite[Theorem 6.1]{Huazhang16},
Luo obtained a version of logarithmic vanishing theorem under  the
$k$-ample condition over a smooth projective variety and his proof
relies on the hyperplane induction methods on projective manifold
(e.g. the existence of very ample divisors). It is apparently
different from our unified analytic approaches over \emph{K\"ahler
manifolds}. On the other hand, it is also pointed out in
\cite[p.~127]{SS} that, the $k$-ampleness is irrelevant to the
$k$-positivity when $1\leq k\leq \dim X$.

\vskip 0.3\baselineskip

  Theorem \ref{main2} has
several variants and the first one is

\begin{corollary}Let $X$ be a compact K\"{a}hler manifold
$D=\sum_{j=1}^sD_j$ be a simple normal crossing divisor of $X$. Let
$[D']$ be a $k$-positive $\mb{R}$-line bundle over $X$, where
$D'=\sum_{i=1}^s c_i D_i$ with $c_i>0$ and $c_i\in\mb{R}$. Then
$$H^q(X,\Omega^p(\log D)\otimes \mc{O}_X(-\lceil D'\rceil))=0 ~~\text{for any}~~ p+q<n-k.$$In particular, when $[D']$ is  ample, \beq H^q(X,\Omega^p(\log
D)\otimes \mc{O}_X(-\lceil D'\rceil))=0,\ ~~\text{for}~~
p+q<n.\label{KV} \eeq
\end{corollary}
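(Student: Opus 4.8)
The plan is to derive this corollary from Theorem \ref{main} by Serre duality. First I would recall that the wedge pairing gives a perfect duality
$$\Om_X^p(\log D)\ts \Om_X^{n-p}(\log D)\longrightarrow \Om_X^n(\log D)=K_X\ts\mc{O}_X(D),$$
whence $\Om_X^p(\log D)^\vee\cong \Om_X^{n-p}(\log D)\ts K_X^{-1}\ts\mc{O}_X(-D)$. Since $X$ is a compact complex manifold and the sheaf in question is locally free, Serre duality yields
$$H^q\big(X,\Om_X^p(\log D)\ts\mc{O}_X(-\lceil D'\rceil)\big)^\vee\cong H^{n-q}\big(X,\Om_X^{n-p}(\log D)\ts\mc{O}_X(\lceil D'\rceil-D)\big).$$
Thus it suffices to prove the vanishing of the right-hand group whenever $p+q<n-k$; writing $p''=n-p$ and $q''=n-q$, this range is exactly $p''+q''\geq n+k+1$, which is the range appearing in Theorem \ref{main}.

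Next I would set up the data of Theorem \ref{main} so that it produces precisely this group. Take $L=\mc{O}_X$, which is nef, and
$$N=\mc{O}_X(\lceil D'\rceil-D)=\mc{O}_X\Big(\sum_{i=1}^s(\lceil c_i\rceil-1)D_i\Big),$$
an honest integral line bundle since each $\lceil c_i\rceil-1\in\Z$. Choose the boundary $\Delta=\sum_{i=1}^s a_iD_i$ with
$$a_i=c_i-\lceil c_i\rceil+1=1-(\lceil c_i\rceil-c_i).$$
Because $\lceil c_i\rceil-c_i\in[0,1)$ for every real $c_i$, we have $a_i\in(0,1]\subset[0,1]$, so $\Delta$ is an admissible $\R$-divisor for Theorem \ref{main}. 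The point of this choice is the identity
$$N\ts\mc{O}_X([\Delta])=\mc{O}_X\Big(\sum_{i=1}^s(\lceil c_i\rceil-1+a_i)D_i\Big)=\mc{O}_X\Big(\sum_{i=1}^s c_iD_i\Big)=[D'],$$
which is $k$-positive by hypothesis. Theorem \ref{main} then gives $H^{q''}(X,\Om_X^{n-p}(\log D)\ts N)=0$ for $p''+q''\geq n+k+1$, and combined with the duality isomorphism above this yields the assertion for $p+q<n-k$. The final displayed statement follows by taking $k=0$, since an ample $\R$-line bundle is $0$-positive.

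The main obstacle I anticipate is the bookkeeping of the divisor coefficients: one must split the integral part $\lceil D'\rceil-D$ off as the genuine line bundle $N$ while absorbing the fractional deficit into an admissible boundary $\Delta$ with all coefficients in $[0,1]$, in such a way that $N\ts\mc{O}_X([\Delta])$ reconstitutes exactly the given $k$-positive class $[D']$. Everything else is formal once the perfect pairing for logarithmic forms and Serre duality are in hand; the only remaining point to verify is that the degree shift $p\mapsto n-p$, $q\mapsto n-q$ converts the hypothesis $p+q<n-k$ into the hypothesis $p''+q''\geq n+k+1$ of Theorem \ref{main}, which it does.
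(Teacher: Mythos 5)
Your proposal is correct and follows essentially the same route as the paper: the identical choices $N=\mc{O}_X(\lceil D'\rceil - D)$ and $\Delta=\sum_i(1+c_i-\lceil c_i\rceil)D_i$ with $L=\mc{O}_X$, an application of Theorem \ref{main}, and Serre duality via the isomorphism $(\Omega^p_X(\log D))^*\cong\Omega^{n-p}_X(\log D)\otimes\mc{O}_X(-K_X-D)$. The only difference is presentational (you dualize first and then apply the theorem, the paper does the reverse), so there is nothing substantive to add.
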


\noindent \textbf{Remark.}
  By using Serre duality, one obtains a special case of (\ref{KV})
 $$H^q(X,K_X\ts \mc{O}_X(\lceil D'\rceil))=0, \  ~~\text{for}~~ q>0.$$
This is  proved by Kawamata in \cite[Theorem~1]{Ka2} (see also
\cite[Corollary~1-2-2]{Kawa1}, \cite[Theorem
3.1.7]{Fujino} and \cite[Theorem~5.1]{MO05}).\\

\noindent The second variant is

\bcorollary Let $X$ be a compact K\"{a}hler manifold and
$D=\sum_{j=1}^sD_j$ be a simple normal crossing divisor of $X$. Let
$[D']$ be a $k$-positive $\mb{R}$-line bundle over $X$, where
$D'=\sum_{i=1}^s a_i D_i$ with $a_i>0$ and $a_i\in\mb{R}$.
 If
there exists a line bundle $L$ over $X$ and a real number $b$ with
$0< a_j<b$ for all $j$
 and $bL=[D']$ as $\R$-line bundles. Then
  $$H^q(X,\Omega^p(\log D)\otimes L^{-1})=0$$
   for $p+q>n+k$ and $p+q<n-k$.

\ecorollary \noindent Note that, Esnault and Viehweg obtained  a
similar result in \cite[Theorem~6.2(a)]{EV06} for $\Q$-divisors by
using the the degeneration of the logarithmic Hodge to de Rham
spectral sequence together with the cyclic covering trick over projective
manifolds. The third variant is \bcorollary\label{corollary1.8} Let
$X$ be a compact \ka manifold of dimension $n$ and $D=\sum_{i=1}^s
D_i$ be a simple normal crossing divisor in $X$. Suppose there exist
some real constants $a_i\geq 0$ such that $\sum_{i=1}^s a_iD_i$
is a $k$-positive $\R$-divisor, then for any nef line bundle $L$, we
have
  \begin{align*}
    H^q(X,\Omega^p_X(\log D)\otimes L)=0\quad \text{for any}\,\,\, p+q\geq n+k+1.
  \end{align*}
\ecorollary

\noindent Note that Corollary \ref{corollary1.8} generalizes
\cite[Corollary~3.1.2]{Fujino}.

\noindent \textbf{Remark}. In a sequel to this paper, we will
systematically  investigate a number of vanishing theorems in
algebraic geometry by using analytic methods introduced in this
paper. For instance, we have obtained a version of Theorem
\ref{main} for $k$-ample line bundles on algebraic manifolds.

\vskip 1\baselineskip \textbf{Acknowledgements.}  The authors would
also like to thank Junyan Cao, Yifei Chen, Jean-Pierre Demailly,
Osamu Fujino, Shin-ichi Matsumura, Xiaotao Sun, Valentino Tosatti,
Jian Xiao and Xiangyu Zhou for their interests and/or discussions.

\vskip 1\baselineskip

\section{Preliminaries}
\subsection{Positivity of vector bundles}
Let $E$ be a holomorphic vector bundle of rank $r$ over a complex
manifold $M$ and $h$ be a smooth Hermitian metric on $E$. There
exists a unique connection $\nabla$, called the Chern connection of
$(E,h)$, which is compatible with the metric $h$ and complex
structure on $E$. Let $\{z^i\}_{i=1}^n$ be the local holomorphic
coordinates on $M$ and $\{e_{\alpha}\}_{\alpha=1}^r$ be the local
holomorphic frames of $E$. Locally, the curvature tensor of $(E,h)$
takes the form
\begin{equation}
  \sq \Theta(E, h) =\sq R_{i\overline{j}\alpha}^\gamma dz^i\wedge d\overline{z}^j \otimes e^{\alpha}\otimes e_{\gamma}\nonumber
\end{equation}
where $R_{i\overline{j}\alpha}^\gamma=h^{\gamma\overline{\beta}}R_{i\overline{j}\alpha\overline{\beta}}$ and\\
\beq
  R_{i\overline{j}\alpha\overline{\beta}}=-\frac{\partial^2 h_{\alpha\overline{\beta}}}{\partial z^i\partial \overline{z}^j}
  +h^{\gamma\overline{\delta}}\frac{\partial h_{\alpha\overline{\delta}}}{\partial z^i}\frac{\partial h_{\gamma\overline{\beta}}}{\partial \overline{z}^j}.
\label{cur}\eeq Here and henceforth we adopt the Einstein convention
for summation.

 \bdefinition

 An Hermitian vector bundle $(E,h)$ is said to be Griffiths-positive, if for any nonzero vectors $u=u^i \frac{\partial}{\partial z^i}$ and $v=v^\alpha e_\alpha,$
\begin{equation}
  \sum_{i,j,\alpha,\beta}R_{i\overline{j}\alpha\overline{\beta}}u^i\overline{u}^j v^\alpha \overline{v}^\beta>0.\nonumber
\end{equation}
$(E,h)$ is said to be Nakano-positive, if for any nonzero vector
$u=u^{i\alpha}\frac{\partial}{\partial z^i}\otimes e_\alpha,$
\begin{equation}
  \sum_{i,j,\alpha,\beta}R_{i\overline{j}\alpha\overline{\beta}}u^{i\alpha}\overline{u}^{j\beta}>0.\nonumber
\end{equation}
$(E,h)$ is said to be dual-Nakano-positive, if for any nonzero
vector $u=u^{i\alpha}\frac{\partial}{\partial z^i}\otimes e_\alpha,$
\begin{equation}
  \sum_{i,j,\alpha,\beta}R_{i\overline{j}\alpha\overline{\beta}}u^{i\beta}\overline{u}^{j\alpha}>0.\nonumber
\end{equation}

\edefinition

%
%\bremark\label{re} The notions of semi-positivity, negativity and
%semi-negativity can be defined similarly.  A direct sum of positive
%line bundles is both Nakano-positive and dual Nakano-positive. It is
%clear that Nakano positivity implies Griffiths positivity and that
%both concepts coincide if $r = 1$ (in the case of a line bundle, $E$
%is merely said to be positive). \eremark

\bdefinition [cf. \cite{SS}] \label{k-posive} Let $M$ be a compact
complex manifold and $L\rightarrow M$ be a holomorphic line bundle
over $M$. \bd \item $L$ is called \emph{$k$-positive} ($0\leq k\leq
n-1$) if there exists a smooth Hermitian metric $h^L$ on $L$ such
that the curvature form $\sqrt{-1}\Theta(L, h^L)=-\sq\p\bp\log h^L$ is
semipositive everywhere and has at least $n-k$ positive eigenvalues
at every point of $M$.

\item $L$ is called \emph{$k$-ample} ($0\leq k\leq
n-1$), if $L$ is semi-ample and suppose that $L^m$ is globally
generated for some $m>0$, and the maximum dimension of the fiber of
the evaluation map $X\>\P\left(H^0(M,L^m)^*\right)$ is $\leq k$.

\ed

 \edefinition

\noindent Hence, the concepts of $0$-positivity, $0$-ampleness and
ampleness are the same. However, it is pointed out in
\cite[p.~127]{SS} that, $k$-ampleness is irrelevant to the metric
$k$-positivity when $k\geq 1$.

\subsection{Simple normal crossing divisors and Poincar\'{e} Type Metric}

On a compact K\"ahler manifold $X$, a divisor $D=\sum_{i=1}^s D_i$
 is  called a \emph{simple normal crossing divisor}  if every irreducible component  $D_i$ is smooth and all
intersections are transverse. The sheaf of germs of differential
$p$-forms on $X$ with at most logarithmic poles along $D$, denoted
{$\Om^p_X(\log D)$} ( introduced by Deligne in \cite{Del69}) is the
sheaf whose sections on an open subset $V$ of $X$ are \beq
\Gamma(V,\Om_X^p(\log D)):=\{\alpha\in \Gamma(V,\Om_X^p\ts\sO_X(D))\
\text{and }d\alpha\in \Gamma(V,\Om_X^{p+1}\ts\sO_X(D)) \}.\eeq

We will consider the complement $Y=X-D$ of a simple normal crossing divisor $D$ in a compact
K\"ahler manifold $X$.
It is well-known that we can choose a local coordinate chart
$(W;z_1,...,z_n)$ of $X$ such that the
    locus of $D$ is given by $z_1\cdots z_k=0$ and $Y\cap W=W_r^*=(\Delta_r^*)^k\times (\Delta_r)^{n-k}$
    where $\Delta_r$ (resp. $\Delta_r^*$) is the (resp. punctured) open disk of
    radius $r$ in the complex plane and $r\in (0,\frac{1}{2}]$. Instead of focusing on the
compact complex manifold $X$, we shall give a K\"{a}hler metric
$\omega_Y$ only on the open manifold $Y$, which enjoys some special
asymptotic behaviors along $D$.

\bdefinition \label{poincare}
 We say that the metric $\omega_Y$ on $Y$ is of Poincar\'{e} type along $D$, if for each local coordinate chart $(W;z_1,...,z_n)$
  along $D$ the restriction $\omega_Y|_{W_{\frac{1}{2}}^{*}}$ is equivalent to the usual Poincar\'{e} type metric $\omega_{P}$ defined by
\begin{equation}
  \omega_{P}=\sqrt{-1}\sum_{j=1}^k\frac{dz_j\wedge d\overline{z}_j}{|z_j|^2\cdot\log^2 |z_j|^2}+\sqrt{-1}\sum_{j=k+1}^{n}dz_j\wedge
  d\overline{z}_j.
\end{equation}
\edefinition

Two nonnegative functions or Hermitian metrics $f$ and $g$ defined
on $W_{\frac{1}{2}}^{*}$ are said to be equivalent along $D$ if for
any relatively compact subdomain $V$ of $W$, there is a positive
constant $C$ such that $(1/C)g\leq f\leq Cg$ on $V-D$. In this case
we shall use the notation $f\sim g$.

\vskip 0.4\baselineskip

 As a fundamental result along this line,  it is well-known
that, see \cite[Section~3]{Zucker22}, there always exists a K\"ahler
metric $\omega_Y$ on $Y=X-D$ which is of Poincar\'e type along $D$.
Furthermore, this metric  is complete and of  finite volume.
Moreover, it has bounded geometry which implies that its curvature tensor and covariant derivatives are
bounded. We will use these properties frequently in this paper. The
following model example is used in analyzing the integrability of
holomorphic sections with respect to the Poincar\'e type metrics.

\bexample\label{example} For any positive integer $n$, the integral
$$\int_0^{\frac{1}{2}} r^\alpha(\log r)^n dr$$ is finite if and
only if $\alpha>-1$.\eexample

\subsection{$L^2$-Estimates and $L^2$ Cohomology}
 We need the following $L^2$-estimates,
which will be used frequently in this paper. \blemma
[{\cite{AV01,Hormander09,Dem}}] \label{L2} Let $(M,\omega)$ be a
complete K\"{a}hler manifold. Let $(E,h^E)$ be an Hermitian vector
bundle over $M$. Assume that $A=[i\Theta(E,h^E),\Lambda_\omega]$ is
positive definite everywhere on $\Lambda^{p,q}T^{*}M\otimes E$,
$q\geq1$. Then for any form $g\in L^2(X,\Lambda^{p,q}T^{*}M\otimes
E)$ satisfying $\overline{\partial} g=0$ and $\int_M
(A^{-1}g,g)dV_\omega <+\infty,$
 there exists $f\in L^2(X,\Lambda^{p,q-1}T^{*}M\otimes E)$ such that $\overline{\partial}f=g$ and
 \begin{equation}
   \int_M|f|^2 dV_\omega\leq \int_M (A^{-1}g,g)dV_\omega.\nonumber
 \end{equation}
\elemma

\noindent Suppose that $\omega_Y$ is a smooth K\"ahler metric on $Y$
and $h^E_Y$ is a smooth Hermitian metric on $E|_Y$.  The sheaf
$\Omega_{(2)}^{p,q}(X,E,\omega_Y,{h}_Y^E)$ (for short
$\Omega_{(2)}^{p,q}(X,E)$) over $X$ is defined as follows. On any open
subset $U$ of $X$, the section space
$\Gamma(U,\Omega_{(2)}^{p,q}(X,E))$ of $\Omega_{(2)}^{p,q}(X,E)$
over $U$ consists of $E$-valued $(p,q)$-forms $u$ with measurable
coefficients such that the $L^2$ norms of both $u$ and
$\overline{\partial}u$ are integrable on any compact subset $V$ of
$U$. Here the integrability means that both $|u|_{\omega_Y\ts
h_Y^E}^2$ and $|\bp^{E} u|_{\omega_Y\ts h_Y^E}$ are integrable on
$V-D$.  It is well-known that the spaces of global sections of
$\Omega_{(2)}^{p,q}(X,E)$ with $\overline{\partial}$ operator form
an $L^2$ Dolbeault complex
\begin{equation}
  \Gamma(X,\Omega_{(2)}^{p,0}(X,E))\rightarrow \Gamma(X,\Omega_{(2)}^{p,1}(X,E))\rightarrow\cdot\cdot\cdot\rightarrow \Gamma(X,\Omega_{(2)}^{p,n}(X,E))\rightarrow0
\end{equation}
and the associated cohomology groups $H_{(2)}^{p,*}(Y,E)$ are called the $L^2$ Dolbeault cohomology groups with values in $E$.

%\bremark \label{fine}
Recall that a sheaf $\mathscr{S}$ over $X$ is called a \emph{fine sheaf} if for any finite open covering $\mathfrak{U}=\{U_j\}$, there is a family of homomorphisms $\{h_j\}$, $h_j:\mathscr{S}\to \mathscr{S}$, such that the support of $h_j$ satisfying that $\text{Supp}(h_j)\subset U_j$ and $\sum_j h_j=\text{identity}$ (cf. \cite[Definition 3.13]{Kodaira}). For any fine sheaf $\mathscr{S}$, one has $H^q(X, \mathscr{S})=0$ for $q\geq 1$ (cf. \cite[Theorem 3.9]{Kodaira}).

We have already known that if the
K\"{a}hler metric $\omega_Y$ on $Y$ is of Poincar\'{e} type along
$D$, then the K\"{a}hler metric $\omega_Y$ will be complete and with
finite volume (cf. \cite{Zucker22}). In this case if $u$ is an
$E$-valued $(p,q)$-form such that $u$ and $\overline{\partial}u$ are
$L^2$ local integrable on $U$ and if $f$ is a smooth function on
$X$, then it is obvious that both $fu$ and $\overline{\partial}(fu)$
will still be $L^2$ local integrable on $U$. Thus the sheaf $\Omega_{(2)}^{p,q}(X,E)$ admits a partition of unity and we conclude that
$\Omega_{(2)}^{p,q}(X,E)$ is a fine sheaf over $X$, so we have  $
H^{i}(X,\Omega_{(2)}^{p,q}(X,E))=0$, for $p,q \geq0$ and $i\geq1$.
%\eremark

\subsection{$\mb{R}$-divisors and $\mb{R}$-linear equivalence}
\noindent \\
For readers' convenience, we  explain the notions  in Theorem
\ref{main}. Let $X$ be a compact K\"ahler manifold.

(1).  $T$ is called an $\mb{R}$-divisor, if it is an element of
$\mathrm{Div}_{\mb{R}}(X):=\mathrm{Div}(X)\otimes_{\mb{Z}} \mb{R}$, where $\mathrm{Div}(X)$ is
the set of divisors in $X$. Two divisors $T_1,T_2$ in
$\mathrm{Div}_{\mb{R}}(X)$ are said to be linearly equivalent, denoted by
$T_1\sim_{\mb{R}} T_2$, if their difference $T_1-T_2$ can be written
as a finite sum of principal divisors with real coefficients, i.e.
 \begin{align}\label{d111}
   T_1-T_2=\sum_{i=1}^k r_i(f_i)
 \end{align}
where $r_i\in \mb{R}$ and $(f_i)$ is the principal divisor
associated to a meromorphic function $f_i$ (cf.
\cite[5.2.3]{Fujino}).

 (2). An $\mb{R}$-line bundle $L=\sum_i a_i L_i$ is a finite sum with
  some real numbers $a_1,\cdots, a_k$ and certain line bundles $L_1,\cdots,
L_k$.
%$L$ is called an $\mb{R}$-line bundle if there exist
%some real numbers $a_1,\cdots, a_k$ and line bundles $L_1,\cdots,
%L_k$ such that $L=\sum_i a_i L_i$.
Note that we also use $``\ts"$
for operations on line bundles. An $\R$-line bundle $L=\sum_i a_i
L_i$  is said to be $k$-positive if there exist smooth metrics
$h_1,\cdots, h_k$ on $L_1,\cdots, L_k$ such that the curvature of
the induced metric on $L$, which is explicitly given by
$$\sqrt{-1}\Theta(L, h)=\sum_{i=1}^k a_i \sqrt{-1}\Theta({L_i},
h_i)$$ is $k$-positive. It is easy to see that if there is another
expression $L=\sum_{i=1}^{\ell} b_i \tilde L_i$ for the $k$-positive
line bundle $L$, then by $\p\bp$-lemma on compact K\"ahler
manifolds, we can find smooth metrics on $\tilde L_i$ such that the
induced metric on $L$ is also $k$-positive. The definitions for
$\Q$-line bundles and $\Q$-divisors are similar.

\bremark As we shall see in the proofs of Theorem \ref{main} and its
applications, the Hermitian metrics  on $\R$-line bundles and
$\R$-divisors play the key role in the analytic approaches. \eremark

\vskip 2\baselineskip

\section{An $L^2$-type Dolbeault isomorphism}\label{haha}
\noindent In this section we will establish an $L^2$-type Dolbeault
isomorphism by using H\"{o}rmander's $L^2$-estimates. %Let
%$\Om^p_X(\log D)$ denote the sheaf of holomorphic $(p,0)$-forms with
%logarithmic poles along $D$.

\btheorem \label{L2 iso11} Let $(X,\omega)$ be a compact \ka
manifold of dimension $n$ and $D$ be a simple normal crossing
divisor in $X$. Let $\omega_P$ be a smooth K\"{a}hler metric
 on $Y=X-D$ which is of Poincar\'{e} type along $D$. Then
there exists a smooth Hermitian metric $h_Y^L$ on $L|_Y$ such that
the sheaf $\Omega^{p}(\log D)\otimes\mathcal{O}(L)$ over $X$  enjoys
a fine resolution given by the $L^2$ Dolbeault complex
$(\Omega_{(2)}^{p,*}(X,L,\omega_P,{h}_Y^L),\overline{\partial})$,
that is, we have an exact sequence of sheaves over $X$
\begin{equation}\label{3.1}
  0\rightarrow\Omega^{p}(\log D)\otimes\mathcal{O}(L)\rightarrow\Omega_{(2)}^{p,*}(X,L,\omega_P,{h}_Y^L)
\end{equation}
such that $\Omega_{(2)}^{p,q}(X,L,\omega_P,{h}_Y^L)$ is a fine sheaf
for any $0\leq p,q\leq n$. In particular, \begin{equation}
  H^{q}(X,\Omega^{p}(\log D)\otimes\mathcal{O}(L))\cong H_{(2)}^{p,q}(Y,L,\omega_P,{h}_Y^L)\cong
  \mathbb{{H}}_{(2)}^{p,q}(Y,L,\omega_P,{h}_Y^L).\label{isoo}
\end{equation}
\etheorem

\bproof Let $h^L$ be an arbitrary smooth Hermitian metric on $L$
over $X$. Let $\sigma_i$ be the defining section of $D_i$. Fix
smooth Hermitian metrics $\|\bullet\|_{D_i}$ on $[D_i]$ such that
$\|\sigma_i\|_{D_i}<\frac{1}{2}$. For arbitrarily fixed constants
$\tau_i\in (0,1]$, we construct a smooth Hermitian metric
$h^{L}_{\alpha,\tau}:=h_Y^L$ on $L|_Y$ as
\begin{equation}\label{c1}
 h^{L}_{\alpha,\tau}=\prod_{i=1}^s\|\sigma_i\|_{D_i}^{2\tau_i}(\log^2(\|\sigma_i\|_{D_i}^{2}))^{\frac{\alpha}{2}}h^L.
\end{equation}
where $\alpha$ is a large positive constant (even integer) to be
determined later. It is well known that
$\Omega_{(2)}^{p,q}(X,L,\omega_P, h^L_Y)$ are fine sheaves over $X$
since $\omega_P$ on $Y=X-D$ is of Poincar\'{e} type along $D$,
%(see Remark \ref{fine}),
so we only need to check the
exactness of (\ref{3.1}).

First let us consider the exactness of (\ref{3.1}) at $q=0$. Let
$(W;z_1,...,z_n)$ be a local coordinate chart of $X$ along $D$. Let
$e$ be a trivialization section of $L$ on $W$ such that
$\frac{1}{2}\leq|e(z)|_{h^L}\leq1$ over $W$. Denote
\begin{center}
  $\zeta_j=\frac{1}{z_j}dz_j$, for $1\leq j\leq t$; and $\zeta_j=dz_j$, for $t+1\leq j\leq n$.
\end{center}
Let $\sigma$ be a holomorphic section of $\Omega_{(2)}^{p,0}(X,L)$
on $W$. Then we can write
\begin{equation}
  \sigma(z)=\sum_{|I|=p}\sigma_I(z)\zeta_{i_1}\wedge\cdot\cdot\cdot\wedge\zeta_{i_p}\otimes e\nonumber
\end{equation}
where $I=(i_1,...,i_p)$ is a multi-index with
$i_1<\cdot\cdot\cdot<i_p$ and $\sigma_I(z)$ is a holomorphic
function on $W_{1/2}^{*}$. By definition, we see that $\sigma$ is
$L^2$ integrable on
$W_r^{*}\triangleq\Delta_{r}^{*t}\times\Delta_{r}^{n-t}\subset
W_{1/2}^{*}$ for any $0<r<1/2.$ Note that the Hermitian metric
$h^{L}_{\alpha,\tau}|_{W_{1/2}^{*}}$ is equivalent to the following
Hermitian metric
\begin{equation}
  h^{L}_{\alpha}=h^{L}_{\alpha}(W_{1/2}^{*})=\prod_{i=1}^t|z_i|^{2\tau_i}(\log^2|z_i|^{2})^{\frac{\alpha}{2}}h^L.
\end{equation}
If we denote $\{i_1,...,i_p\}\cap\{1,...,t\}=\{i_{p1},...,i_{pb}\}$,
then
\begin{equation}
  \|\sigma\|_{L^2(W_r^{*})}^2=\sum_{|I|=p}\int_{W_r^{*}} |e|_{h^L}^2
  \left(|\sigma_I(z)|^2 \prod_{\nu=1}^b\log^2|z_{i_{p\nu}}|^2
  \prod_{i=1}^t|z_i|^{2\tau_i}(\log^2|z_i|^{2})^{\frac{\alpha}{2}}
  \right)\omega_P^{n}.\label{key24}
\end{equation}
Suppose that the Laurent series representation of $\sigma_I(z)$ on
$W_{1/2}^{*}$ is given by
\begin{equation}
  \sigma_I(z)=\sum_{\beta=-\infty}^{\infty}\sigma_{I\beta}(z_{t+1},...,z_n)z_1^{\beta_1}\cdot\cdot\cdot z_t^{\beta_t}, \beta=(\beta_1,...,\beta_t)\nonumber
\end{equation}
where $\sigma_{I\beta}(z_{t+1},...,z_n)$ is a holomorphic function
on $\Delta_{1/2}^{n-t}$. Then by using polar coordinates and
Fubini's theorem (e.g. Example \ref{example}),  we see that $\sigma$
is $L^2$ integrable on $W_r^{*}$ if and only if  $\beta_j>-\tau_j$
along $D_j$. Since $\tau_j\in(0,1]$, we see $\beta_j\geq 0$ and
$\sigma_I(z)$ has removable singularity. Hence $\sigma$ and $\nabla
\sigma$ have only logarithmic pole, and $\sigma$ is a section of
$\Omega^{p}(\log D)\otimes\mathcal{O}(L)$ on $W$. Conversely, if we
choose $\sigma$ to be a holomorphic section of $\Omega^{p}(\log
D)\otimes\mathcal{O}(L)$ on $W$, it is easy to check by formula
(\ref{key24}) that $\sigma$ is $L^2$ integrable on $W_r^{*}$ for any
$0<r<\frac{1}{2}$. Therefore we have proved that (\ref{3.1}) is
exact at $\Omega_{(2)}^{p,0}(X,L)$ for any
$\alpha>0$.\\

Now we consider the exactness of (\ref{3.1}) at $q\geq1$. For any
fixed $r\in(0,1/2)$, we deform the K\"{a}hler metric $\omega_{P}$ to
be a new K\"{a}hler metric $\widetilde{\omega}_{P}$ on $W_r^{*}$,
given by
\begin{equation}
  \widetilde{\omega}_{P}=\widetilde{\omega}_{P}(W_r^{*})
  =\omega_{P}+\sqrt{-1}\sum_{i=1}^n\partial\overline{\partial}\psi_{i}
  =\sqrt{-1}\sum_{i=1}^n\widetilde{g}_{ii}dz_i\wedge d\overline{z}_i
\end{equation}
where $\psi_i(z)=\frac{1}{r^2-|z_i|^2}, z\in W_r^{*}$. Then it is
easy to check that $\widetilde{\omega}_{P}$ is a complete K\"{a}hler
metric on $W_r^{*}$. We define  a new Hermitian metric
$\widetilde{h}^L_{\alpha}$ for $L$ on $W_r^{*}$ as
\begin{equation}
 \widetilde{h}^L_{\alpha}=\widetilde{h}^L_{\alpha}(W_r^{*})
  =\prod_{i=1}^t|z_i|^{2\tau_i}(\log^2|z_i|^{2})^{\frac{\alpha}{2}}
  \prod_{i=1}^{n}\exp(-2\alpha|z_i|^2-\alpha\psi_{i})h^L.
\end{equation}

\blemma \label{1414} On $W_r^{*}$ the Chern curvature of
$\widetilde{h}^L_{\alpha}$ satisfies
\begin{equation} \label{2424}
  \sqrt{-1}\Theta(L, \widetilde{h}^L_{\alpha})\geq\alpha\widetilde{\omega}_{P}
\end{equation}
for some large $\alpha>0$. \elemma

\bproof  It is easy to show that \beq \sq\p\bp\log |z_i|^2=0
\qtq{and} -\sqrt{-1}
   \partial\overline{\partial}\log(\log (|z_j|^2))^2=\frac{2\sqrt{-1}dz_j\wedge
d\overline{z}_j}{|z_j|^2(\log(|z_j|^2))^2}.\label{computation}\eeq
The curvature of $(L,\widetilde{h}^L_{\alpha,\tau})$ is given by

\begin{equation}
\begin{split}
  \sqrt{-1}\Theta(L, \widetilde{h}^L_{\alpha})&=-\sqrt{-1}\sum_{i=1}^t\tau_i\partial\overline{\partial}\log|z_i|^2-\sqrt{-1}\frac{\alpha}{2}
   \sum_{i=1}^t\partial\overline{\partial}\log(\log^2 |z_i|^2)\\
&\hspace*{1.26cm}
+2\sqrt{-1}\alpha\sum_{i=1}^n\partial\overline{\partial}|z_i|^2+\sqrt{-1}\alpha\sum_{i=1}^n\partial\overline{\partial}
   \psi_i+\sqrt{-1}\Theta(h^L)\\
&\geq-\sqrt{-1}\frac{\alpha}{2}
   \sum_{i=1}^t\partial\overline{\partial}\log(\log^2|z_i|^2)+\sqrt{-1}\alpha\sum_{i=1}^n\partial\overline{\partial}|z_i|^2+\sqrt{-1}\alpha\sum_{i=1}^n\partial\overline{\partial}
   \psi_i\\
&\geq\alpha\widetilde{\omega}_{P},\nonumber
\end{split}\end{equation}
if we choose $\alpha$ large enough so that
$\sqrt{-1}\alpha\sum_{i=1}^n\partial
\overline{\partial}|z_i|^2+\sqrt{-1}\Theta(h^L)\geq0$ on $W_r^{*}$.
\eproof

\blemma\label{Na} On the chart $W_r^*$, the vector bundle  $
V:=\Om_Y^p\ts K_Y^{-1}\ts
  L|_Y$ with the induced metric $h^V$ by $\tilde \omega_P$ and $\tilde
  h_\alpha^L$ is Nakano positive when $\alpha$ is large enough.
  Moreover,  for any $u\in \Gamma(W^*_{\frac{r}{2}}, \Lambda^{n,q}T^*Y\ts
  V)$ we have
  \beq \langle\left[\sq\Theta(V, h^V),\Lambda_{\tilde\omega_P}\right]u,u\rangle\geq C|u|^2\label{ine}\eeq
  where $C$ is a positive constant independent of $u$.
\elemma

\bproof Note that the metric $\tilde \omega_P$ on the
holomorphic tangent bundle $TY$ is
  of the splitting form, i.e.
  \beq \tilde \omega_P= \sum_{i=1}^n\omega_i(z_i),\eeq
  and that the metric $\omega_i(z_i)$ depends only on the variable
$z_i$. Hence, by using curvature formula (\ref{cur}), in  local
computations, we can treat $(TY,\tilde \omega_P)$ as a direct sum of
line bundles $\ds_{i=1}^n(F_i,\omega_i)$. It is easy to check that the
curvature of $(F_i,\omega_i)$ \beq |\sq \p\bp\log \omega_i|\leq
C\tilde \omega_P\eeq for some positive constant $C$ independent of
$\alpha$. Hence, in local computations, the curvature of
$V=\Om_Y^p\ts K_Y^{-1}\ts
  L|_Y$ is the curvature of a direct sum of line bundles  $L|_Y\ts F^{-1}_{i_1}\ts\cdots\ts
  F^{-1}_{i_{n-p}}$. Therefore, by using the curvature estimate (\ref{2424}),  when $\alpha>(n-p+1)C$, the
  curvature of each summand   $L\ts F^{-1}_{i_1}\ts\cdots\ts
  F^{-1}_{i_{n-p}}$ is strictly positive. That means $V$ is Nakano
  positive. The inequality (\ref{ine}) follows from a
  straightforward
  calculation.
\eproof

\blemma The sequence of (\ref{3.1}) is exact at $q\geq1$.  That is,
on a small local chart
  $W^*_{\frac{r}{2}}=(\Delta_\frac{r}{2}^*)^k\times
(\Delta_\frac{r}{2})^{n-k}$, for any  $\bp$-closed  $L$-valued
$(p,q)$ form $\eta$ on $W_{\frac{r}{2}}^*$, if it is
$L^2$-integrable with respect to $(\omega_P, h^L_{\alpha})$, then
there exists an $L$-valued $(p,q-1)$ form $f$ on $W_{\frac{r}{2}}^*$
such that $f$ is $L^2$-integrable with respect to $(\omega_P,
h^L_{\alpha})$ and $\bp f= \eta$. \elemma

\bproof For simplicity, we write $W=W^*_{\frac{r}{2}}$. Suppose
$\eta\in\Gamma(W, \Lambda^{p,q}T^*Y\ts
  L)$ is $\bp_L$ closed and $L^2$-integrable with
  respect to $( \omega_P, h^L_{\alpha})$. Note that $V=\Om^p_Y\ts K^{-1}_Y\ts
  L|_Y$, we have
  \beq  \Gamma(W,\Lambda^{n,q}T^*Y\ts V)\cong \Gamma(W, \Lambda^{p,q}T^*Y\ts
  L|_Y).\eeq
   Since $\tilde h^L_{\alpha,\tau}\sim h^L_{\alpha,\tau}$, $\omega_P\sim \tilde \omega_P$ on $W$,   by Lemma \ref{Na} and Lemma \ref{L2},
   there exists $$f\in \Gamma(W,\Lambda^{n,q-1}T^*Y\ts V)\cong \Gamma(W, \Lambda^{p,q-1}T^*Y\ts
  L)$$ such that $\bp f=\eta$ on $W^*_{\frac{r}{2}}$, and $f$ is $L^2$- integrable with respect
  to $(\tilde \omega_P, \tilde h^L_{\alpha})$. By restricting to $W=W^*_{\frac{r}{2}}$, we have that $f$ is also $L^2$-integrable over $W$ with respect
  to $(\omega_P,  h^L_{\alpha})$.
\eproof

\noindent Given the exact sequence in (\ref{3.1}),  the isomorphisms
in (\ref{isoo}) are clear. The proof of Theorem \ref{L2 iso11} is
completed. \eproof

\bremark\label{re} \bd\item The isomorphism (\ref{isoo})  holds up
to equivalence of metrics. More precisely, if $\tilde \omega_P \sim
\omega_P$ and $\tilde h_Y^L\sim h_Y^L$, then
$$\mathbb{{H}}_{(2)}^{p,q}(Y,L,\omega_P,{h}_Y^L) \cong \mathbb{{H}}_{(2)}^{p,q}(Y,L,\tilde \omega_P,{\tilde h}_Y^L). $$

\item
 From the proof of Theorem \ref{L2 iso11}, it is easy to see that
the isomorphism in Theorem \ref{L2 iso11}  also works  for vector
bundles. \ed\eremark

\vskip 2\baselineskip

\section {Logarithmic  vanishing theorems}\label{V01}
\noindent In this section, we  prove  Theorem \ref{main} and several
applications described in the first section.

\begin{thm} [=Theorem \ref{main}] \label{a11}
 Let $X$ be a compact \ka manifold of dimension $n$ and $D=\sum_{i=1}^s D_i$ be a
simple normal crossing divisor in $X$.  Let $N$ be a line bundle and
$\t=\sum_{i=1}^s a_iD_i$ be an $\R$-divisor  with $a_i\in[0,1]$ such
that $N\otimes \mc{O}_X([\t])$ is a $k$-positive $\R$-line bundle.
Then for any nef line bundle $L$, we have
  \begin{align*}
    H^q(X,\Omega^p_X(\log D)\otimes L\otimes N)=0\quad \text{for any}\,\,\, p+q\geq n+k+1.
  \end{align*}
\end{thm}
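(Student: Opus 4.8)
The plan is to deduce the vanishing from the $L^2$-Dolbeault isomorphism of Theorem \ref{L2 iso11}, applied in its vector-bundle form (Remark \ref{re}) to the line bundle $L\ts N$. First I would fix a Poincar\'e type K\"ahler metric $\omega_Y$ on $Y=X-D$ and construct a metric $h_Y$ on $(L\ts N)|_Y$ so that $H^q(X,\Om^p_X(\log D)\ts L\ts N)\cong \mathbb{H}^{p,q}_{(2)}(Y,L\ts N,\omega_Y,h_Y)$; it then suffices to show this $L^2$-cohomology vanishes for $p+q\geq n+k+1$. Since $\omega_Y$ and $h_Y$ matter only up to equivalence, I have the freedom to choose them adapted to the curvature data, and I would feed the result into Lemma \ref{L2}: if the curvature operator $A=[\sq\Theta(L\ts N,h_Y),\Lambda_{\omega_Y}]$ is uniformly positive on $(p,q)$-forms in the stated range, then every $\bp$-closed $L^2$ form is $\bp$-exact and the cohomology dies.

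The metric is built from three ingredients. By hypothesis there is a metric on $N\ts\mc{O}_X([\t])$ whose curvature $\Theta_0=\sq\Theta(N,h^N)+\sum_i a_i\,\sq\Theta([D_i])$ is semipositive with at least $n-k$ positive eigenvalues everywhere; by compactness these are bounded below by a uniform $c_0>0$. Since $L$ is nef, for each $\eps>0$ there is a metric $h^L_\eps$ with $\sq\Theta(L,h^L_\eps)\geq -\eps\,\omega_Y$. I would then take $h_Y$ of the form \eqref{c1}, namely $\prod_i\|\sigma_i\|_{D_i}^{2\tau_i}(\log^2\|\sigma_i\|_{D_i}^2)^{\alpha/2}\,h^L_\eps\ts h^N$, choosing $\tau_i=a_i$ (and a small positive $\tau_i$ when $a_i=0$, to retain the resolution property of Theorem \ref{L2 iso11}). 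With this choice the factor $\prod_i\|\sigma_i\|_{D_i}^{2\tau_i}$ together with $h^N$ reproduces exactly the $k$-positive curvature $\Theta_0$ away from $D$, while the logarithmic factor contributes, via \eqref{computation}, the large positive Poincar\'e-type term governed by $\alpha$ near $D$.

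For the positivity of $A$ I would use the eigenvalue identity for the curvature operator on $(p,q)$-forms: if $\gamma_1,\dots,\gamma_n$ are the eigenvalues of $\sq\Theta(L\ts N,h_Y)$ with respect to $\omega_Y$, then on the component indexed by $(I,J)$ the operator acts by $\sum_{j\in I\cap J}\gamma_j-\sum_{j\in I^c\cap J^c}\gamma_j$. The decisive arithmetic point is that $p+q\geq n+k+1$ together with $p\leq n$ forces $q\geq k+1$, and that I may choose $\omega_Y$ adapted to $\Theta_0$ on the compact part of $Y$, e.g. $\omega_Y\sim\Theta_0+\delta\omega$ for small $\delta$, so that the $\geq n-k$ positive eigenvalues of $\Theta_0$ are equalized near $1$. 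A direct count then gives $\sum_{j\in I\cap J}\gamma_j-\sum_{j\in I^c\cap J^c}\gamma_j\geq p+q-n-k\geq 1$, uniformly, up to an $O(\delta)$ error and the nef error $-\eps\omega_Y$, both absorbed for $\delta,\eps$ small. Near $D$ this naive computation breaks down because the logarithmic term makes some $\gamma_j$ blow up; there I would instead argue exactly as in Lemma \ref{Na}, passing to $(n,q)$-forms valued in $V=\Om_Y^p\ts K_Y^{-1}\ts L\ts N$ so that the cotangent curvature is correctly absorbed and $V$ becomes Nakano positive for $\alpha$ large, giving $A\geq C>0$.

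The hard part will be the simultaneous construction of the single pair $(\omega_Y,h_Y)$ that makes $A$ uniformly positive on all of $Y$: near $D$ one is forced into the Poincar\'e regime, where positivity comes only from the $\alpha$-term and the $V$-reformulation of Lemma \ref{Na}, whereas on the compact part positivity requires a metric adapted to $\Theta_0$ so that its $k$-positivity dominates the bounded curvature of $\Om_Y^p\ts K_Y^{-1}$. Reconciling these two regimes across the transition zone, while keeping $\omega_Y$ of Poincar\'e type and $h_Y$ in the equivalence class required by Theorem \ref{L2 iso11}, and handling the boundary indices $a_i=0$, is where the main analytic difficulty lies. Once $A\geq c>0$ is secured, Lemma \ref{L2} yields solvability of $\bp$ for $L^2$ data, so $\mathbb{H}^{p,q}_{(2)}(Y,L\ts N,\omega_Y,h_Y)=0$ and the theorem follows.
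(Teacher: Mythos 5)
Your overall architecture coincides with the paper's: reduce to $L^2$ cohomology via the Dolbeault-type isomorphism of Theorem \ref{L2 iso11}, build a singular-looking but smooth-on-$Y$ metric on $L\ts N$ out of $h^N$, a nef metric on $L$, the weights $\|\sigma_i\|^{2\tau_i}$ with $\tau_i$ near $a_i$, and the logarithmic factors, and then kill the $L^2$ cohomology with Lemma \ref{L2} via an eigenvalue count using $p+q\geq n+k+1$. Your compact-part eigenvalue computation is essentially the paper's.

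The genuine gap is exactly the point you flag as ``the hard part'' and leave unresolved: producing a \emph{single} pair $(\omega_Y,h_Y)$ for which the curvature operator is uniformly positive on all of $Y$. Your proposed patch near $D$ --- passing to $(n,q)$-forms valued in $V=\Om_Y^p\ts K_Y^{-1}\ts L\ts N$ as in Lemma \ref{Na} --- does not work for the global vanishing: Lemma \ref{Na} is a purely local statement on a chart $W_r^*$, where Nakano positivity of $V$ is bought by letting $\alpha$ dominate the bounded curvature of $\Om_Y^p\ts K_Y^{-1}$; globally $\Om_Y^p\ts K_Y^{-1}$ has no positivity, and on the compact part you must instead exploit the $k$-positivity of $F=N\ts\sO_X([\t])$ together with $p+q\geq n+k+1$, so the two regimes really are incompatible as you set them up, and no gluing argument is supplied. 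The paper's resolution is a single global device that makes the case distinction disappear: it sets $\mathscr F=L\ts N$, $\delta=c_0/(32n^2)$, and \emph{defines}
\begin{equation*}
\omega_Y:=\sqrt{-1}\Theta\bigl(\mathscr F,h^{\mathscr F}_{\alpha,\epsilon,\tau}\bigr)+2(4n+1)\delta\omega_0 ,
\end{equation*}
which is Poincar\'e type and satisfies $\omega_Y\geq 8n\delta\omega_0$. With this choice the eigenvalues of the curvature with respect to $\omega_Y$ are $\gamma_i=\lambda_i/(\lambda_i+2(4n+1)\delta)$, hence automatically lie in $[-\tfrac{1}{4n},1)$ everywhere on $Y$ --- the logarithmic blow-up near $D$ is tamed because large $\lambda_i$ gives $\gamma_i$ close to $1$, not to $\infty$ --- while for $i\geq k+1$ one gets $\gamma_i\geq 1-\tfrac{1}{4n}$ from the $k$-positivity of $F$. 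A single uniform estimate $\langle[\sqrt{-1}\Theta,\Lambda_{\omega_Y}]u,u\rangle\geq\tfrac12|u|^2$ then holds on all of $Y$, and Lemma \ref{L2} applies. Without this (or an equivalent) normalization your argument does not close; also note the paper keeps $\tau_i\in(0,1]$ merely close to $a_i$ and inserts a parameter $\epsilon$ in the logarithm precisely so that the error terms in \eqref{b4} are bounded by $\tfrac{\delta}{2}\omega_0$, rather than taking $\tau_i=a_i$ exactly.
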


\bproof  Let $\omega_0$ be a fixed K\"{a}hler metric on $X$. Let
$F=N\ts\sO_X([D])$. Since $F$ is a $k$-positive $\R$-line bundle,
there exist smooth metrics $h^N$ and $h^{[D_i]}$ on $F$ and $[D_i]$
respectively, such that the curvature form of the induced metric
$h^F$ on $F$
\begin{align}\label{b5}\sqrt{-1}\Theta(F,h^F)=\sq\Theta(N,h^N)+\sq\sum_{i=1}^s a_i \Theta([D_i], h^{[D_i]})\end{align}
is semipositive and has at least $n-k$ positive eigenvalues at each
point of $X$.

\vskip 0.3\baselineskip

 Let $\{\lambda^j_{\omega_0}(h^{F})\}_{j=1}^n$ be the eigenvalues
of $\sqrt{-1}\Theta(F, h^{F})$ with respect to $\omega_0$ such that
$\lambda^j_{\omega_0}(h^{F})\leq \lambda^{j+1}_{\omega_0}(h^{F})$
for all $j$.  Thus for any $j\geq k+1$ we have
 $$\lambda^j_{\omega_0}(h^{F})\geq \lambda^{k+1}_{\omega_0}(h^{F})\geq\min_{x\in X} \left(\lambda^{k+1}_{\omega_0}(h^{ F})(x)\right)=:c_0>0.$$
We set   $\delta=\frac{c_0}{32 n^2}$. Without loss of generality, we
 assume $\delta\in (0,1)$. Since $L$ is nef, there exists a smooth
metric $h^L_{\delta}$ on $L$ such that
\begin{align}\label{b3}\sqrt{-1}\Theta(L,h^L_{\delta})=-\sqrt{-1}\p\b{\p}\log h^L_{\delta}>-\delta\omega_0.\end{align}

\noindent Let $\sigma_i$ be the defining section of $D_i$. Fix
smooth metrics   $h_{D_i}:=\|\cdot\|_{D_i}^2$  on line bundles
$[D_i]$, such that $\|\sigma_i\|_{D_i}<\frac{1}{2}$. Write the curvature form of $[D_i]$ as $c_1(D_i) = \sqrt{-1}\Theta([D_i], h_{D_i})$.  We define
$h^{\t}:=\prod_{i=1}^s h_{D_i}^{a_i}$, then the curvature form of
$(\Delta, h^{\Delta})$ is
\begin{align}\label{4.3}
  -\sqrt{-1}\p\bp\log h^{\t}=-\sqrt{-1}\p\bp\log\prod_{i=1}^s h_{D_i}^{a_i}.
\end{align}

\noindent For simplicity, we set
  \beq\mathscr{F}:=L\ts N=L\otimes F\otimes \mc{O}_X(-[\t]).\eeq The induced metric on $\mathscr{F}$ is defined by
  \begin{align*}
    h^{\mathscr F}_{\alpha,\epsilon,\tau}=h^L_{\delta}\cdot h^F\cdot (h^{\t})^{-1}\cdot\prod_{i=1}^s \|\sigma\|^{2\tau_i}_{D_i}\left(\log^2 (\epsilon\|\sigma_i\|^2_{D_i})\right)^{\frac{\alpha}{2}}.
  \end{align*}
  Here the constant $\alpha>0$ is chosen to be large enough and the constants $\tau_i,\epsilon \in (0,1]$ are to be determined
  later. Note that the smooth metric $h^F\cdot (h^{\t})^{-1}$ on $N=F\otimes
  \mc{O}_X(-[\t])$ is the same as $h^N$ up to a globally defined
  function over $X$.
  A straightforward computation shows that
  \begin{align}\label{b4}
  \begin{split}
    &\sqrt{-1}\Theta\left(\mathscr{F},h^{\mathscr{F}}_{\alpha,\epsilon,\tau}\right)\\
    &=\sqrt{-1}\Theta(F,h^F)+\sqrt{-1}\Theta(L,h^L_{\delta})+\sum_{i=1}^{s}(\tau_i-a_i)c_1(D_i)\\
    &\quad +\sum_{i=1}^s \frac{\alpha c_1(D_i)}{\log (\epsilon\|\sigma_i\|^2_{D_i})}+\sqrt{-1}\sum_{i=1}^s\frac{\alpha \p\log\|\sigma_i\|^2_{D_i}\wedge \b{\p}\log\|\sigma_i\|^2_{D_i}}{(\log (\epsilon\|\sigma_i\|^2_{D_i}))^2}.
    \end{split}
  \end{align}

\noindent  Since $a_i\in [0,1]$, for a fixed large $\alpha$,  we can
choose $\tau_1, \cdots, \tau_{s}\in (0,1]$ and $\epsilon$ such that
$\tau_i-a_i$, $\epsilon$
  are small enough and
  \begin{align}\label{b2}
  -\frac{\delta}{2}\omega_0\leq \sqrt{-1}\sum_{i=1}^s(\tau_i-a_i)c_1(D_i)\leq
  \frac{\delta}{2}\omega_0, \ \ \  -\frac{\delta}{2}\omega_0\leq\sum_{i=1}^s \frac{\alpha c_1(D_i)}{\log (\epsilon\|\sigma_i\|^2_{D_i})}\leq \frac{\delta}{2}\omega_0.
  \end{align}
Note that the constants $\tau_i$ and $\epsilon$ are thus fixed, and
the choice of $\epsilon$ depends on $\alpha$. \noindent We set \beq
\omega_Y=\sqrt{-1}\Theta\left(\mathscr{F},h^{\mathscr{F}}_{\alpha,\epsilon,\tau}\right)+2(4n+1)\delta\omega_0.\eeq
It is easy to check that $\omega_Y$ is a Poincar\'{e} type K\"ahler
metric on $Y$. \noindent By (\ref{b5}), (\ref{b3}), (\ref{b4}) and
(\ref{b2}), one has on $Y$
  \begin{align}\label{b6}
    \sqrt{-1}\Theta\left(\mathscr{F},h^{\mathscr{F}}_{\alpha,\epsilon,\tau}\right)\geq \sqrt{-1}\Theta(F,h^{F})-2\delta \omega_0.
  \end{align}
  Since $\sqrt{-1}\Theta(F,h^{F})$ is a semipositive (1,1) form, we see that on $Y$
  \begin{align}\label{b1}
    \omega_Y=\sqrt{-1}\Theta\left(\mathscr{F},h^{\mathscr{F}}_{\alpha,\epsilon,\tau}\right)+2(4n+1)\delta\omega_0\geq 8n\delta\omega_0.
  \end{align}
This implies that
  \begin{align*}
  \begin{split}
  \sqrt{-1}\Theta\left(\mathscr{F},h^{\mathscr{F}}_{\alpha,\epsilon,\tau}\right)=\omega_Y-2(4n+1)\delta\omega_0
  \geq-\frac{1}{4n}\omega_Y.
  \end{split}
  \end{align*}
By exactly the same argument as in the proof of Theorem \ref{L2
iso11} (see also Remark \ref{re}), when $\alpha$ is large enough, we
obtain
  \beq  H^q\left(X,\Omega^p_X(\log D)\otimes \mathscr{F}\right)\cong H^{p,q}_{(2)}\left(Y, \mathscr{F}, \omega_Y,
  h^{\mathscr{F}}_{\alpha,\epsilon,\tau}\right). \label{isooo} \eeq

\vskip0.3\baselineskip Next, we prove the vanishing of  the $L^2$
cohomology groups by using Lemma \ref{L2}.
 On a local chart of $Y$, we may assume that
$\omega_0=\sqrt{-1}\sum_{i=1}^n \eta_i\wedge \b{\eta}_i$ and
  $$\sqrt{-1}\Theta\left(\mathscr{F},h^{\mathscr{F}}_{\alpha,\epsilon,\tau}\right)=\sqrt{-1}\sum_{i=1}^n \lambda^i_{\omega_0}\left(h^{\mathscr{F}}_{\alpha,\epsilon,\tau}\right)\eta_i\wedge \b{\eta}_i.$$
  Then
  \begin{align*}
    \sqrt{-1}\Theta\left(\mathscr{F},h^{\mathscr{F}}_{\alpha,\epsilon,\tau}\right)
    &=\sqrt{-1}\sum_{i=1}^n \lambda^i_{\omega_0}\left(h^{\mathscr{F}}_{\alpha,\epsilon,\tau}\right)\eta_i\wedge \b{\eta}_i\\
    &=\sqrt{-1}\sum_{i=1}^n\frac{\lambda^i_{\omega_0}\left(h^{\mathscr{F}}_{\alpha,\epsilon,\tau}\right)}{\lambda^i_{\omega_0}\left(h^{\mathscr{F}}_{\alpha,\epsilon,\tau}\right)+2(4n+1)\delta}\eta'_i\wedge \b{\eta}'_i\\
    &=\sqrt{-1}\sum_{i=1}^n\frac{16n^2\lambda^i_{\omega_0}\left(h^{\mathscr{F}}_{\alpha,\epsilon,\tau}\right)}{16n^2\lambda^i_{\omega_0}\left(h^{\mathscr{F}}_{\alpha,\epsilon,\tau}\right)+(4n+1)c_0}\eta'_i\wedge \b{\eta}'_i
  \end{align*}
where $$\eta'_i=\eta_i\cdot
\sqrt{\lambda^i_{\omega_0}(h^{\mathscr{F}}_{\alpha,\epsilon,\tau})+2(4n+1)\delta}.$$
Note
 that $\omega_Y=\sqrt{-1}\sum_{i=1}^n \eta'_i\wedge
\b{\eta}'_i, $ and so the eigenvalues of
$\sqrt{-1}\Theta\left(\mathscr{F},h^{\mathscr{F}}_{\alpha,\epsilon,\tau}\right)$
with respect to $\omega_Y$ are
  \begin{align*}
    \gamma_i:=\frac{16n^2\lambda^i_{\omega_0}\left(h^{\mathscr{F}}_{\alpha,\epsilon,\tau}\right)}{16n^2\lambda^i_{\omega_0}\left(h^{\mathscr{F}}_{\alpha,\epsilon,\tau}\right)+(4n+1)c_0}<1.
  \end{align*}
Thus $\gamma_j\in [-\frac{1}{4n},1)$. On the other hand, by
(\ref{b6}) one has
  \begin{align*}
    \lambda_{\omega_0}^{j}\left(h^{\mathscr{F}}_{\alpha,\epsilon,\tau}\right)\geq \lambda_{\omega_0}^{j}(h^{ F})-2\delta.
  \end{align*}
Hence for any $j\geq k+1$, we have
  \begin{align*}
    \lambda_{\omega_0}^{j}\left(h^{\mathscr{F}}_{\alpha,\epsilon,\tau}\right)\geq
    \min_{x\in X}  \left(\lambda_{\omega_0}^{k+1}(h^{F})(x)\right)-2\delta
    =c_0-2\delta=\left(1-\frac{1}{16n^2}\right)c_0>0.
  \end{align*}
  It also implies that for $j\geq k+1$,
  \begin{align*}
    \gamma_j&=\frac{16n^2\lambda^i_{\omega_0}\left(h^{\mathscr{F}}_{\alpha,\epsilon,\tau}\right)}{16n^2\lambda^i_{\omega_0}\left(h^{\mathscr{F}}_{\alpha,\epsilon,\tau}\right)+(4n+1)c_0}\\
    &\geq \frac{16n^2(1-\frac{1}{16 n^2})c_0}{16n^2(1-\frac{1}{16 n^2})c_0+(4n+1)c_0}=1-\frac{1}{4n}.
  \end{align*}
For any section $u\in \Gamma(Y,\Lambda^{p,q}TY\ts \mathscr{F})$, we
obtain
\begin{align*}
  \left\langle\left[\sqrt{-1}\Theta\left(\mathscr{F}, h^{\mathscr{F}}_{\alpha,\epsilon,\tau}\right),\Lambda_{\omega_Y}\right]u,u\right\rangle
  &\geq \left(\sum_{i=1}^q \gamma_i
  -\sum_{j=p+1}^n \gamma_j\right)|u|^2\\
  &\geq \left((q-k)\left(1-\frac{1}{4n}\right)
  -\frac{k}{4n}-(n-p)\right)|u|^2\\
  &=\left((q+p-n-k)-\frac{q-k}{4n}-\frac{k}{4n}\right)|u|^2\\
  &\geq \frac{1}{2}|u|^2.
\end{align*}
Thus, Theorem \ref{a11} follows from (\ref{isooo}) and Lemma
\ref{L2}.\eproof

\noindent As applications of Theorem \ref{a11}, we obtain

\bcorollary  \label{c11}Let $X$ be a compact \ka manifold of
dimension $n$ and $D$ be a simple normal crossing divisor.  Suppose
that $N$ is a $k$-positive line bundle and $L$ is a nef line bundle,
 then
\begin{equation}
H^{q}(X, \Om_X^p(\log D)\ts N\ts L)=0 \qtq{for any $p+q\geq
n+k+1$.}\nonumber
\end{equation}
\ecorollary

\noindent In particular, one can deduce  the following well-known
result.

\bcorollary\label{c22} Let $X$ be a compact \ka manifold of
dimension $n$ and $D$ be a simple normal crossing divisor. Suppose
that $L\rightarrow X$ is an ample line bundle, then
\begin{equation}
H^{q}(X, \Om_X^p(\log D)\ts L)=0\qtq{for any $p+q\geq n+1$.}\nonumber
\end{equation}
\ecorollary

\noindent As an analogue to Corollary \ref{c22}, we  obtain  the
following log  type Le Potier vanishing theorem for ample vector
bundles.

\bcorollary \label{c44} Let $X$ be a compact \ka manifold of
dimension $n$ and $D$ be a simple normal crossing divisor. Suppose
that $E\rightarrow X$ is an ample vector  bundle of rank $r$. Then
\begin{equation}
H^{q}(X, \Om_X^p(\log D)\ts E)=0\qtq{for any $p+q\geq n+r$.}\nonumber
\end{equation}
 \ecorollary

 \bproof Let $\pi:\P(E^*)\>X$ be the projective bundle of $E$ and $\sO_E(1)$ be the tautological line bundle. By using the Le Potier
isomorphism (e.g. \cite[Theorem~5.16]{SS}), we have\beq
H^{q}(X,\Om_X^{p}(\log D)\ts E)\cong
H^q(\P(E^*),\Om^p_{\P(E^*)}(\log\pi^{*}D)\ts \sO_E(1)).\eeq On the
other hand, it is easy to see that $\pi^{-1}D$ is also a simple
normal crossing divisor. Hence, Corollary \ref{c44} follows from
Corollary \ref{c11}. \eproof

\noindent By using the same strategy as in the proof of Theorem
\ref{a11}, we also obtain several log type Nakano
 vanishing theorems for vector bundles on $X$. For instance,

\bproposition\label{Nakano} Let $E$ be a vector bundle of rank $r$
and $L$ be a line bundle on $X$.

\bd\item  If $E$ is Nakano positive $($resp. Nakano semi-positive$)$
and $L$ is nef $($resp. ample$)$, then for any $q\geq 1$
\begin{equation}
H^q(X,\Om_X^n(\log D)\otimes E\ts L)=0.\nonumber
\end{equation}

\item If $E$ is dual-Nakano positive $($resp. dual-Nakano semi-positive$)$ and  $L$ is nef $($resp. ample$)$, then for any $p\geq 1$
\begin{equation}
H^n(X,\Om^p_X(\log D)\otimes E\ts L)=0.\nonumber
\end{equation}

\item If $E$ is globally generated and  $L$ is ample, then for any $p\geq 1$
\begin{equation}
H^n(X,\Om^p_X(\log D)\otimes E\ts L)=0.\nonumber
\end{equation}
\ed \eproposition

\noindent Indeed, the vector bundle $E\ts L$ in Proposition
\ref{Nakano} is either Nakano positive or dual Nakano positive (e.g.
\cite{LY15}). Hence, the proof is very similar to $($but simpler
than$)$ that in Theorem \ref{a11}.

\section{Applications }

 In this section, we  present several straightforward  applications
 of Theorem \ref{main} over compact K\"ahler manifolds,
 which are also closely related to a number of classical vanishing theorems in algebraic
 geometry.

\begin{theorem}\label{cor4.7} Let $X$ be a compact \ka manifold of
dimension $n$ and $D=\sum_{i=1}^s D_i$ be a simple normal crossing
divisor. Suppose $F$ is a line bundle over $X$ and $m$ is a positive
real number such that $mF=L+D'$, where $D'=\sum_{i=1}^s \nu_i D_i$
is an effective normal crossing $\R$-divisor and  $L$ is a
$k$-positive $\R$-line bundle. Then
  \begin{align}
    H^q\left(X,\Omega^p(\log D)\otimes F\otimes \mc{O}_X\left(-\sum_{i=1}^s\left(1+\left[\frac{\nu_i}{m}\right]\right)D_i\right)\right)=0
  \end{align}
  for $p+q\geq n+k+1$.
\end{theorem}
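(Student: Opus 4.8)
The plan is to deduce Theorem \ref{cor4.7} directly from Theorem \ref{main} by a purely formal manipulation of $\R$-line bundles; all the analytic content (the metric constructions, the $L^2$-isomorphism, and the curvature estimate) is already packaged inside Theorem \ref{main}, so nothing new of that kind is needed here. The idea is to absorb the negative divisor $-\sum(1+[\nu_i/m])D_i$ together with the fractional part of $\frac{1}{m}D'$ into an admissible boundary $\Delta=\sum a_i D_i$ with $a_i\in[0,1]$, so that after the correction one is left with exactly the positive multiple $\frac{1}{m}L$. Concretely, I would first set
\[
N_0:=F\ts\sO_X\left(-\sum_{i=1}^s\left(1+\left[\frac{\nu_i}{m}\right]\right)D_i\right),
\]
which is an honest integral line bundle because each coefficient $1+[\nu_i/m]$ is a nonnegative integer (here the effectiveness $\nu_i\geq 0$ is used). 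This $N_0$ will play the role of the line bundle $N$ in Theorem \ref{main}, and I would take the nef line bundle there to be the trivial one.

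Next I would introduce the $\R$-divisor $\Delta=\sum_{i=1}^s a_i D_i$ with the \emph{fractional complements}
\[
a_i:=1+\left[\frac{\nu_i}{m}\right]-\frac{\nu_i}{m}\in(0,1],
\]
where membership in $(0,1]$ follows from $[\nu_i/m]\leq \nu_i/m<[\nu_i/m]+1$; thus $\Delta$ meets the hypothesis $a_i\in[0,1]$ of Theorem \ref{main}. The key computation is then the identity of $\R$-line bundles
\[
N_0\ts\sO_X(\Delta)=F\ts\sO_X\left(\sum_{i=1}^s\left(a_i-1-\left[\frac{\nu_i}{m}\right]\right)D_i\right)=F\ts\sO_X\left(-\frac{1}{m}D'\right),
\]
using $a_i-1-[\nu_i/m]=-\nu_i/m$. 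Invoking the hypothesis $mF=L+D'$, i.e. $F=\frac{1}{m}L+\frac{1}{m}D'$ as $\R$-line bundles, this collapses to $N_0\ts\sO_X(\Delta)=\frac{1}{m}L$.

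Finally, since $L$ is a $k$-positive $\R$-line bundle and $m>0$, the scaled bundle $\frac{1}{m}L$ is again $k$-positive, because multiplying the chosen curvature by the positive constant $\frac{1}{m}$ scales all eigenvalues by $\frac{1}{m}$ and hence preserves semipositivity and the number of positive eigenvalues. Therefore $N_0\ts\sO_X(\Delta)$ is $k$-positive, and applying Theorem \ref{main} with this $N_0$, this $\Delta$, and the trivial (hence nef) line bundle yields
\[
H^q\left(X,\Om^p_X(\log D)\ts N_0\right)=0\qquad\text{for } p+q\geq n+k+1,
\]
which is precisely the asserted vanishing. I do not expect a genuine obstacle in this argument: the only real subtlety is spotting the correct choice $a_i=1+[\nu_i/m]-\nu_i/m$ and verifying it lands in $[0,1]$, together with careful bookkeeping of the floor and fractional parts; the entire analytic difficulty has already been discharged in the proof of Theorem \ref{main}.
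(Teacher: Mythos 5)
Your proposal is correct and coincides with the paper's own proof: the authors make exactly the same choices $N=F\otimes \mc{O}_X\bigl(-\sum_{i}(1+[\nu_i/m])D_i\bigr)$ and $\Delta=\sum_{i}(1+[\nu_i/m]-\nu_i/m)D_i$, observe that $N\ts\sO_X([\Delta])=\frac{1}{m}L$ is $k$-positive, and invoke Theorem \ref{main}. Your additional checks (that $a_i\in(0,1]$ and that scaling by $1/m$ preserves $k$-positivity) are correct and only make explicit what the paper leaves implicit.
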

\begin{proof}  Let  $$N=F\otimes
\mc{O}_X\left(-\sum_{i=1}^s\left(1+\left[\frac{\nu_i}{m}\right]\right)D_i\right)$$
and
$$\Delta=\sum_i\left(1+\left[\frac{\nu_i}{m}\right]-\frac{\nu_i}{m}\right)D_i.$$
We have that \beq N\ts \sO_X([\Delta])=\frac{1}{m} L,\eeq which is a
$k$-positive $\R$-line bundle. Hence we can apply Theorem \ref{a11}
to complete the proof.
\end{proof}

\begin{corollary} \label{cKV}
Let $X$ be a compact K\"{a}hler manifold $D=\sum_{j=1}^sD_j$ be a
simple normal crossing divisor of $X$. Let $[D']$ be a $k$-positive
$\mb{R}$-line bundle over $X$, where $D'=\sum_{i=1}^s c_i D_i$ with
$c_i>0$ and $c_i\in\mb{R}$. Then
$$H^q(X,\Omega^p(\log D)\otimes \mc{O}_X(-\lceil D'\rceil))=0 ~~\text{for any}~~ p+q<n-k.$$In particular, when $[D']$ is  ample, \beq H^q(X,\Omega^p(\log
D)\otimes \mc{O}_X(-\lceil D'\rceil))=0,\ ~~\text{for}~~ p+q<n. \eeq

\end{corollary}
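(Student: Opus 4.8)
The plan is to deduce the statement from Theorem \ref{main} by Serre duality, trading the low-degree range $p+q<n-k$ for the complementary high-degree range to which Theorem \ref{main} applies.

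First I would record the duality for logarithmic forms. Locally, in coordinates where $D=\{z_1\cdots z_k=0\}$, the sheaf $\Om^1_X(\log D)$ is free with generators $\frac{dz_1}{z_1},\dots,\frac{dz_k}{z_k},dz_{k+1},\dots,dz_n$, so the wedge product induces a perfect pairing $\Om^p_X(\log D)\ts \Om^{n-p}_X(\log D)\to \Om^n_X(\log D)=K_X\ts\sO_X(D)$, whence
\begin{equation}
(\Om^p_X(\log D))^\vee\cong \Om^{n-p}_X(\log D)\ts K_X^{-1}\ts \sO_X(-D).\nonumber
\end{equation}
Since $\Om^p_X(\log D)$ is locally free and $X$ is compact K\"ahler, Serre duality applies to $\mathcal E:=\Om^p_X(\log D)\ts\sO_X(-\lceil D'\rceil)$ and yields
\begin{equation}
H^q(X,\mathcal E)^*\cong H^{n-q}(X,K_X\ts \mathcal E^\vee)\cong H^{n-q}(X,\Om^{n-p}_X(\log D)\ts \sO_X(\lceil D'\rceil - D)).\nonumber
\end{equation}
Writing $p'=n-p$ and $q'=n-q$, the hypothesis $p+q<n-k$ is equivalent (for integers) to $p'+q'\geq n+k+1$, so it suffices to prove $H^{q'}(X,\Om^{p'}_X(\log D)\ts \sO_X(\lceil D'\rceil - D))=0$ in that range.

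Next I would put $N:=\sO_X(\lceil D'\rceil - D)=\sO_X(\sum_i(\lceil c_i\rceil-1)D_i)$, which is an honest (integral) line bundle, and take $L=\sO_X$, which is nef. To invoke Theorem \ref{main} I need an $\R$-divisor $\Delta=\sum_i a_iD_i$ with $a_i\in[0,1]$ such that $N\ts\sO_X([\Delta])$ is $k$-positive. Choosing $a_i:=c_i-\lceil c_i\rceil+1$ gives $\sum_i(\lceil c_i\rceil-1+a_i)D_i=D'$, so that $N\ts\sO_X([\Delta])\cong[D']$ as $\R$-line bundles, which is $k$-positive by hypothesis. Because $c_i>0$ forces $\lceil c_i\rceil-c_i\in[0,1)$, we get $a_i\in(0,1]\subset[0,1]$ as required. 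Theorem \ref{main} then gives $H^{q'}(X,\Om^{p'}_X(\log D)\ts L\ts N)=0$ for $p'+q'\geq n+k+1$, which is exactly the vanishing needed above. The ``in particular'' case is immediate since ampleness of $[D']$ is $0$-positivity: taking $k=0$ yields vanishing for $p+q<n$.

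I expect the only substantive points to be the logarithmic residue/wedge duality $\Om^p_X(\log D)^\vee\cong \Om^{n-p}_X(\log D)\ts K_X^{-1}\ts\sO_X(-D)$, which I would verify on the local free generators above, and the ceiling bookkeeping that makes the coefficients $a_i=c_i-\lceil c_i\rceil+1$ land in $[0,1]$ while preserving $k$-positivity of $[D']$; everything else is a mechanical application of Theorem \ref{main} with the trivial nef bundle $L=\sO_X$.
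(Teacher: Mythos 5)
Your proposal is correct and follows essentially the same route as the paper: both apply Theorem \ref{main} with $N=\sO_X(\lceil D'\rceil-D)$, $\Delta=\sum_i(1+c_i-\lceil c_i\rceil)D_i$ (so that $N\ts\sO_X([\Delta])=[D']$), and the logarithmic duality $(\Om^p_X(\log D))^*\cong\Om^{n-p}_X(\log D)\ts\sO_X(-K_X-D)$ together with Serre duality to pass between the ranges $p+q\geq n+k+1$ and $p+q<n-k$. The only difference is cosmetic — you dualize first and then invoke the theorem, while the paper invokes the theorem first and then dualizes.
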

\begin{proof} Let $$N=
\mc{O}_X(-D)\otimes\lceil D'\rceil, \qtq{and}
\Delta=\sum_i\left(1+c_i-\lceil c_i\rceil\right)D_i.$$ It is easy to
see that \beq N\ts\sO_X([\Delta])=[D']\eeq which is a $k$-positive
$\R$-line bundle. By using Theorem \ref{a11}, one has
  \begin{equation}\label{TT}
    H^q(X,\Omega^p(\log D)\otimes \mc{O}_X(-D)\otimes\lceil D'\rceil)=0
  \end{equation}
for any $p+q\geq n+k+1$. By Serre duality and the isomorphism \beq
(\Omega^p_X(\log D))^*\cong\Omega^{n-p}_X(\log D)\otimes
\mc{O}_X(-K_X-D), \label{sd}\eeq we see that (\ref{TT}) is equivalent to
$$H^q(X,\Omega^p(\log D)\otimes \mc{O}_X(-\lceil D'\rceil))=0$$
for any $p+q<n-k$. The proof is complete.
\end{proof}

\bcorollary Let $X$ be a compact K\"{a}hler manifold
and $D=\sum_{j=1}^sD_j$ be a simple normal crossing divisor of $X$. Let
$[D']$ be a $k$-positive $\mb{R}$-line bundle over $X$, where
$D'=\sum_{i=1}^s a_i D_i$ with $a_i>0$ and $a_i\in\mb{R}$.
 If
there exists a line bundle $L$ over $X$ and a real number $b$ with
$0< a_j<b$ for all $j$,
 and $bL=[D']$ as $\R$-line bundles, then
  $$H^q(X,\Omega^p(\log D)\otimes L^{-1})=0$$
   for $p+q>n+k$ and $p+q<n-k$.

\ecorollary \bproof Let $b'$ be a real number such that $\max_j
a_j<b'<b$ and set
  $$N=L^{-1},\quad
  \Delta=\frac{D'}{b'}=\sum_{j=1}^s\frac{a_j}{b'}D_j.$$ Let
$$F=L^{-1}\ts \sO_X([D])=L^{-1}+\frac{D'}{b'}=\frac{b-b'}{bb'}D'.$$
  It is easy to see that $F$ is a $k$-positive $\R$-line bundle and the coefficients of $\Delta$ are in $[0,1]$.
 By Theorem \ref{a11}, we obtain $$H^q(X,\Omega^p(\log D)\otimes L^{-1})=0
  \qtq{
   for $p+q>n+k.$}$$

  On the other hand,  we can set
  $$N=L\ts \sO_X(-D),\ \ \  \Delta=\sum_{j=1}^s\left(1-\frac{a_j}{2b}\right)D_j, \qtq{and} F=N\ts \sO_X([D])=\frac{D'}{2b}.$$
  It is easy to see that $F$ is a $k$-positive $\R$-line bundle and the coefficients of $\Delta$ are in
  $[0,1]$. By Theorem \ref{a11} again, we get
  $$H^{q}(X,\Omega^p(\log D)\otimes L\otimes \mc{O}_X(-D))=0, \qtq{for
  $p+q>n+k.$}$$
By Serre duality and the isomorphism (\ref{sd}), we have
$$H^q(X,\Omega^p(\log D)\otimes L^{-1})=0$$ for any $p+q<n-k$.
\eproof

\bcorollary Let $X$ be a compact \ka manifold of dimension $n$ and
$D=\sum_{i=1}^s D_i$ be a simple normal crossing divisor in $X$.
Suppose there exist some real constants $a_i\geq 0$ such that
$\sum_{i=1}^s a_iD_i$ is a $k$-positive $\R$-divisor, then for any
nef line bundle $L$, we have
  \begin{align*}
    H^q(X,\Omega^p_X(\log D)\otimes L)=0\quad \text{for any}\,\,\, p+q\geq n+k+1.
  \end{align*}
\ecorollary

\bproof We can set $N=\sO_X$ and $\Delta=\frac{1}{1+\sum_{i=1}^sa_i}\sum_{i=1}^s a_iD_i$. Then
$N\ts\sO([\Delta])=\sO([\Delta])$ is a $k$-positive $\R$-line
bundle. \eproof

\end{document}